
\documentclass[11pt]{amsart}
\textwidth=5in
\textheight=7.5in
\usepackage[all,knot]{xy}

\usepackage[active]{srcltx}

\usepackage{pb-diagram}

\usepackage[usenames,dvipsnames,svgnames,table]{xcolor}

\usepackage{mathrsfs}
\usepackage{amsmath}
\usepackage{amssymb}
\usepackage{amscd}
\usepackage{amsthm}
\usepackage[latin1]{inputenc}
\usepackage{graphics}
\usepackage{varioref}

\labelformat{enumi}{(#1)}

\newtheorem{thm}{Theorem} [section]
\newtheorem{defin}{Definition} [section]
\newtheorem{remark}{Remark}[section]
\newtheorem{prop}{Proposition}[section]
\newtheorem{cor}{Corollary}[section]
\newtheorem{lemma}{Lemma}[section]
\def\R{\text{$\mathbb{R}$}}

\def\fis#1{\dot{#1}}
\def\lra{\longrightarrow}

\def\d1#1#2{\frac{d#1}{d#2}}

\def\tpm{T_p M}

\def\p1#1#2{\frac{\partial #1}{\partial #2}}

\def\pr{\parallel}

\def\part{a=t_0 < t_1 < \ldots < t_{k-1} < t_{k}=b}


\def\C{\text{$\mathbb{C}$}}
\def\N{\text{$\mathbb{N}$}}
\def\H{\text{$\mathbb{H}$}}
\def\Z{\text{$\mathbb{Z}$}}

\def\vai{\rightarrow}


\newcommand{\riem}[2]{({#1},{#2})}

\newcommand{\euclideo}{\langle \cdot , \cdot \rangle}

\newcommand{\st}[2]{\textbf{St}({#1},{#2})}
\newcommand{\gr}[2]{\textbf{Gr}({#1},{#2})}
\newcommand{\lin}[2]{\textrm{L}(\R^{#1},{#2})}
\newcommand{\linc}[2]{\textrm{L}(\C^{#1},{#2})}
\newcommand{\id}[1]{\mathrm{Id}_{\R^{#1}}}
\newcommand{\idc}[1]{\mathrm{Id}_{\C^{#1}}}
\newcommand{\stc}[2]{\textbf{St}^{\C}({{#1}},{{#2}})}
\newcommand{\grc}[2]{\textbf{Gr}^{\C}({#1},{#2})}
\begin{document}
\author{Leonardo Biliotti, Mercuri Francesco}
\title{Properly discontinuous actions on Hilbert manifolds}
\address{Universit\`{a} di Parma} \email{leonardo.biliotti@unipr.it}
\address{Universidade Estadual de Campinas (UNICAMP)}
\email{mercuri@ime.unicamp.br}
\thanks{ The authors were partially supported by FIRB 2012 ``Geometria differenziale e teoria geometrica delle functioni'' grant $RBFR12W1AQ003$. The second author was partially supported by Fapesp and CNPq (Brazil)} %
\keywords{Hilbert manifold, homogeneous space, properly discontinuous action}
\subjclass[2000]{58B99;57S25}
 \maketitle
\begin{abstract}
In this article we study properly discontinuous actions on Hilbert manifolds giving new examples of complete Hilbert manifolds with nonnegative, respectively nonpositive, sectional curvature with infinite fundamental group. We also get examples of complete infinite dimensional K\"ahler manifolds with positive holomorphic sectional curvature and infinite fundamental group in contrast with the finite dimensional case and we classify abelian groups acting linearly, isometrically and properly discontinuously on Stiefel manifolds. Finally, we classify homogeneous Hilbert manifolds with constant sectional curvature.
\end{abstract}
\tableofcontents{}
\section{Introduction}
A Hilbert manifold $\riem{M}{\langle \cdot , \cdot \rangle}$ is a smooth manifold modeled on a Hilbert space $\H$ and equipped with an inner product $\langle \cdot , \cdot \rangle (p)$ on any tangent space $\tpm$
depending smoothly on $p$ and defining on $\tpm \cong \H$ a norm equivalent to the original norm of $\H$.

The investigation of global properties in infinite dimensional geometry is harder than in the finite dimensional case essentially because of the lack of local compactness. For example there exist complete Hilbert manifolds with points that cannot be connected by  minimal geodesics  and one can construct on such manifolds finite geodesic segments containing  infinitely many conjugate points \cite{Gr}. A complete description of conjugate points along finite geodesic segment is given in \cite{bpet} and similar questions have been studied in \cite{bi2,Mi3,Mi,Mi2,Mi4}. Moreover, there exist complete Hilbert manifolds such that the exponential map fails to be surjective \cite{at}. However, Ekeland \cite{ek} proved that almost all points can be joined to a prescribed endpoint by a unique minimal geodesic.

Stiefel and Grassmann manifolds have been intensively studied for many authors in different contexts \cite{ba,do,eas,hm,ja,mm}.  We recall that the Stiefel manifold of the $p$ frames in a Hilbert space $\H$, that we denote by $\st{p}{\H}$, can be endowed by a Riemannian metric $\langle \cdot , \cdot \rangle$, induced by the natural embedding of $\st{p}{\H}$ into the Hilbert space $\lin{p}{\H}$ of linear maps of $\R^p$\ in $\H$. The orthogonal group $\mathrm{O}(p)$ acts freely and isometrically on $\st{p}{\H}$ and so the Grassmann manifold of $p$ subspaces of $\H$,\ $\gr{p}{\H}=\st{p}{\H}/\mathrm{O}(p)$,\  can be endowed by a Riemannian metric such that the natural projection $\pi:\st{p}{\H}\lra \gr{p}{\H}$ is a Riemannian submersion \cite{eas,hm}.
In \cite{hm}, using the computation given in \cite{eas}, it has been proved   that, if $\H$\ is separable, any two points in these manifolds can be connected by a minimal geodesic (in section \ref{sectionstiefel} we remove the condition of separability).

In the present paper we study properly discontinuous actions on Stiefel and Grassmannian manifolds applying arguments developed in \cite{bi}. The study of this kind of actions is needed in order to give new examples of Hilbert manifolds with given fundamental group. Our first main result is the following
\begin{thm}\label{main1}
Let $H\cong G\oplus \Z_{p_1^{\alpha_1}}\oplus \cdots \oplus \Z_{p_k^{\alpha_k}}$ where $G$ is a torsionfree group. Let  $\st{p}{\H}$ be the Stiefel manifold of the $p$ orthonormal frames in $\H$, where $\H$ is an infinite dimensional Hilbert space whose Hilbert basis has the same cardinality of $G$.
Then $H$ acts linearly, isometrically and properly discontinuously on $\st {p}{\H}$ if and only if $p_i\neq p_j$ whenever $i\neq j$. Moreover, $G$ acts properly discontinuously on  $\gr{p}{\H}$.
\end{thm}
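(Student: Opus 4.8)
The plan is to reduce the statement to a purely group-theoretic question about fixed-point-free linear actions on the sphere of $\H$, exploiting the structure of $\st{p}{\H}$ as a subset of $\lin{p}{\H}$ and the freeness of the $\mathrm{O}(p)$-action. First I would fix notation: a linear isometric action of $H$ on $\H$ restricts to an action on $\st{p}{\H}$ by postcomposition, since an orthogonal transformation carries $p$-frames to $p$-frames, and this action is automatically isometric for the induced metric. The key elementary observation is that $h\in H$ fixes a point $F\in\st{p}{\H}$ if and only if $h$ fixes pointwise the $p$-dimensional subspace $\mathrm{Im}(F)$, i.e. $h$ has a fixed vector in the unit sphere $\sfera{\H}{1}$ (in fact a $p$-dimensional fixed subspace). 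Properly discontinuous here, since $H$ will act by isometries of a complete manifold, should be equivalent to: the action is free and the orbits are locally finite (no accumulation). The torsionfree part $G$ will act without fixed points automatically once we know the whole action is free, and properly discontinuously provided we arrange the orbit-closedness condition via a suitable choice of representation, following the construction in \cite{bi}.

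The main steps are as follows. (1) Show that a finite cyclic group $\Z_{p^\alpha}$ admits a fixed-point-free orthogonal representation on a Hilbert space iff… it always does (rotation by $2\pi/p^\alpha$ on infinitely many planes), so each torsion factor individually is fine; the obstruction must come from combining factors. (2) Analyze when the direct sum $\Z_{p_1^{\alpha_1}}\oplus\cdots\oplus\Z_{p_k^{\alpha_k}}$ admits a representation on $\H$ that is fixed-point-free \emph{on $p$-frames}, i.e. such that no nontrivial element has a $p$-dimensional fixed subspace. Decompose $\H$ into irreducible (finite-dimensional, since the group is finite abelian — hence $1$- or $2$-dimensional real, indexed by characters) subrepresentations; an element $h$ fixes a $p$-dimensional subspace iff the character components on which $h$ acts trivially have total dimension $\geq p$. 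If two torsion factors share the same prime $p$, then the subgroup $\Z_{p^{\alpha_i}}\oplus\Z_{p^{\alpha_j}}$ has a nontrivial element (of order $p$, the "diagonal" one) contained in the kernel of "too many" characters — more precisely, the characters trivial on a fixed order-$p$ element form an index-$p$ subgroup, and since both factors contribute, one can always find a nontrivial element whose trivial-character isotypic part is infinite-dimensional, forcing a fixed $\infty$-dimensional subspace, a fortiori a fixed $p$-frame. Conversely if the primes $p_i$ are pairwise distinct, one builds a representation where each nontrivial $h$ has finite-dimensional (indeed controlled, $<p$) fixed subspace: use that distinct primes force the "simultaneously trivial" characters to be sparse. (3) Combine with the torsionfree $G$: take a free linear action of $G$ (e.g. permutation-type or the left-regular-type representation on $\ell^2(G)$, exploiting that $\dim\H$ equals the cardinality of $G$) and form the tensor/direct sum with the torsion representation so that freeness of $G$ upgrades to freeness of $H$, and properly discontinuity is inherited by an argument as in \cite{bi} — one checks that orbits do not accumulate because the $G$-part already separates points at bounded distance. (4) Finally, pass to the quotient: since $G$ acts freely and the $\mathrm{O}(p)$-action is free with $\pi$ a Riemannian submersion, the induced $G$-action on $\gr{p}{\H}$ is properly discontinuous.

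The hard part will be step (2), the precise representation-theoretic characterization: one must show not merely that \emph{some} element has a fixed vector when primes coincide, but that \emph{every} linear isometric action of $H$ — not just the "diagonal" models — fails to be properly discontinuous when $p_i=p_j$, which requires understanding the most general orthogonal representation of a finite abelian group on an infinite-dimensional Hilbert space and arguing that the pigeonhole on characters is unavoidable: among the infinitely many character-isotypic summands needed to fill up $\H$, infinitely many must be trivial on some fixed nontrivial element of the shared-prime subgroup, producing a fixed subspace of dimension $\geq p$ (indeed infinite). The converse direction (distinct primes $\Rightarrow$ a good action exists) is more constructive but still requires care to simultaneously guarantee freeness, the $p$-frame condition for \emph{all} nontrivial torsion elements, and non-accumulation of orbits; I expect to handle freeness and non-accumulation by bolting on the $G$-representation as in \cite{bi} and to handle the $p$-frame condition by an explicit choice of which characters to include, ensuring every nontrivial element of the torsion part acts nontrivially on all but finitely many ($<p$ of them) summands.
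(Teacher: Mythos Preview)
Your route is substantially different from the paper's and longer than necessary. The paper does not redo any representation theory: it proves two short transfer lemmas --- (i) a linear isometric $H$-action that is properly discontinuous on the unit sphere $S(\H)$ is automatically properly discontinuous on $\st{p}{\H}$ (just evaluate a hypothetically convergent sequence $a_n x$ at any unit vector $v\in\R^p$), and (ii) for torsionfree $G$ the same hypothesis yields proper discontinuity on $\gr{p}{\H}$ --- and then simply \emph{quotes} the main theorem of \cite{bi}, which already characterizes exactly which groups $H$ of the stated form act properly discontinuously on $S(l_2(G))$. Your steps (1)--(3) amount to reproving that cited result from scratch. Incidentally, on the ``if'' side you are working harder than needed: when $p_1,\dots,p_k$ are pairwise distinct primes the torsion part $\Z_{p_1^{\alpha_1}}\oplus\cdots\oplus\Z_{p_k^{\alpha_k}}$ is \emph{cyclic} by the Chinese Remainder Theorem, so a single faithful $2$-dimensional rotation, repeated, already has no nontrivial element with any fixed vector at all, and your ``fewer than $p$ fixed summands'' bookkeeping is superfluous.

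There is also a genuine gap in your step (4). Freeness of $G$ on $\st{p}{\H}$ together with the $\mathrm{O}(p)$-bundle structure does \emph{not} by itself give freeness of $G$ on $\gr{p}{\H}$: a nontrivial $g\in G$ fixes $\sigma\in\gr{p}{\H}$ iff $g(\sigma)=\sigma$ as a subspace, which is merely $g$-\emph{invariance} of $\sigma$, not pointwise fixing, so it is not ruled out by freeness upstairs. The argument you are missing is the paper's: if $g(\sigma)=\sigma$ then the infinite cyclic group $\langle g\rangle\cong\Z$ (here torsionfreeness of $G$ is essential) would act properly discontinuously on the \emph{compact} sphere $S(\sigma)$, which is impossible. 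Likewise, to pass non-accumulation of orbits from $\st{p}{\H}$ down to $\gr{p}{\H}$ one uses compactness of the fibre $\mathrm{O}(p)$ to lift a convergent sequence $a_n(\sigma)\to\sigma_0$ to a convergent sequence in the Stiefel manifold; your proposal does not address this step.
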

The above theorem gives a complete classification of the abelian groups acting linearly and properly discontinuously on the Stiefel manifolds.
Since $\st{p}{\H}$ is contractible whenever $\H$ has infinite dimensional (see \cite{ee}), an important conclusion  of Theorem \ref{main1} is the existence of a complete Hilbert manifold with nonconstant sectional curvature
with fundamental group isomorphic to $H$.

One can define on $\st{p}{\H}$ another metric $g$,\ called \emph{canonical metric} \cite{eas}. We shall prove that $\riem{\st{p}{\H}}{g}$ is a complete Hilbert manifold
with nonconstant and nonnegative sectional curvature and  any two points in $\riem{\st{p}{\H}}{g}$  can be connected by a minimal geodesic.
We also investigate complex Stiefel and Grassmann manifolds and in section \ref{section 2}, we prove the following result
\begin{thm}\label{main2}
Let $G$ be a torsionfree group.  Then there exists a complete Hilbert manifold $M$ of nonnegative and nonconstant sectional curvature such that $\pi_1 (M)\cong G  \oplus  \Z_{p_1^{\alpha_1}}\oplus \cdots \oplus \Z_{p_k^{\alpha_k}}$ with $p_i\neq p_j$ whenever $i\neq j$. Moreover, there exists a complete and infinite dimensional K\"ahler manifold $M$ with positive holomorphic sectional curvature whose fundamental group is isomorphic to $G$.
\end{thm}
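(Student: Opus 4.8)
The plan is to realise both manifolds as quotients of the contractible, respectively simply connected, model spaces furnished by Theorem \ref{main1} and by complex projective space, by free, isometric, properly discontinuous group actions. For the first statement, fix an integer $p\geq 2$, set $H:=G\oplus\Z_{p_1^{\alpha_1}}\oplus\cdots\oplus\Z_{p_k^{\alpha_k}}$ with the primes $p_i$ pairwise distinct, and let $\H$ be a Hilbert space whose Hilbert basis has the cardinality of $G$. By Theorem \ref{main1}, $H$ acts linearly, isometrically and properly discontinuously---in particular freely---on $\st{p}{\H}$, so that $\st{p}{\H}\lra\st{p}{\H}/H$ is a covering map. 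I would now keep this same (linear) action but equip $\st{p}{\H}$ with the canonical metric $g$ in place of the embedding metric: since every element of $H$ acts by $X\mapsto AX$ for an orthogonal operator $A$ of $\H$, and the canonical metric is built from the expressions $\mathrm{tr}\bigl(\dot X^{*}(I-\tfrac12 XX^{*})\dot Y\bigr)$, the relation $A^{*}A=I$ shows at once that the action remains isometric for $g$. By the results of Section \ref{sectionstiefel}, $\riem{\st{p}{\H}}{g}$ is a complete Hilbert manifold whose sectional curvature is nonnegative and, for $p\geq 2$, nonconstant. Since the quotient projection is a local isometry, $M:=\st{p}{\H}/H$ is then a complete Hilbert manifold with nonnegative and nonconstant sectional curvature, and because $\st{p}{\H}$ is contractible when $\dim\H=\infty$ (see \cite{ee}) it is the universal cover of $M$, whence $\pi_1(M)\cong H$.

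For the second statement I would work with $\CP{\infty}=\grc{1}{\H}$, $\H$ a complex Hilbert space, carrying its Fubini--Study metric: this is a complete K\"ahler manifold of constant, hence positive, holomorphic sectional curvature, and it is simply connected, since the unit sphere $S(\H)$ is contractible and $S(\H)\lra\CP{\infty}$ is a principal $\mathrm{U}(1)$-bundle. If $G$ is trivial there is nothing to prove; otherwise $G$ is infinite (a finite torsionfree group is trivial), so I take $\H=\ell^{2}(G)$ and let $G$ act through its left regular representation $\lambda$. Each $\lambda(g)$ is a unitary operator, hence induces a holomorphic isometry of $\CP{\infty}$. For $g\neq e$ the operator $\lambda(g)$ has no eigenvector: the restriction of an eigenvector to the infinite cyclic subgroup generated by $g$ would be a nonzero element of $\ell^{2}(G)$ of constant modulus (the eigenvalue being of modulus $1$), which is impossible; hence $[\lambda(g)v]\neq[v]$ for every $v\neq 0$ and the induced action on $\CP{\infty}$ is free. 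To establish proper discontinuity I would use that the matrix coefficients $g\mapsto\langle\lambda(g)v,w\rangle$ lie in $c_{0}(G)$, so that for each fixed $v\neq 0$ the Fubini--Study distance from $[\lambda(g)v]$ to $[v]$ tends to the diameter of $\CP{\infty}$ as $g\to\infty$ in $G$; combined with the fact that $G$ acts by isometries, the arguments of \cite{bi} (equivalently, those proving Theorem \ref{main1}) then show that each point has a neighbourhood meeting only the identity translate. Therefore $M:=\CP{\infty}/G$ is a complete, infinite dimensional K\"ahler manifold of positive holomorphic sectional curvature with $\pi_1(M)\cong G$.

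The formal points---invariance of the canonical and Fubini--Study metrics, descent of completeness and of nonnegativity and nonconstancy of the curvature through a local isometry, and the identification of $\pi_{1}$ via covering space theory---are routine. The substance lies in the two ingredients that here replace local compactness: the curvature computation for the canonical metric, carried out in Section \ref{sectionstiefel}, and the proof that the regular representation of $G$ acts properly discontinuously on $\CP{\infty}$, which rests on the $c_{0}$-decay of its matrix coefficients. I expect this last point to be the main obstacle, exactly as in Theorem \ref{main1}.
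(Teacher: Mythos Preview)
Your argument is correct and, for the first statement, essentially identical to the paper's: both equip $\st{p}{\H}$ with the canonical metric $g$, observe that the linear $H$-action furnished by Theorem \ref{main1} is still isometric for $g$ (the paper notes in addition the equivalence $\tfrac12 d_{\euclideo}\le d_g\le d_{\euclideo}$ to transfer proper discontinuity), and then pass to the quotient, invoking contractibility of $\st{p}{\H}$ and Corollary \ref{sc1} for the curvature.

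For the second statement you follow the same overall strategy---regular representation of $G$ on $\ell^2_{\C}(G)$, descent to a complex Grassmannian---but you specialise to $p=1$, i.e.\ to $\CP{\infty}=\grc{1}{\H}$ with the Fubini--Study metric, and you verify proper discontinuity directly via the $c_0$-decay of matrix coefficients of the regular representation together with the formula $\cos d_{\mathrm{FS}}([v],[w])=|\langle v,w\rangle|/(\|v\|\,\|w\|)$. The paper instead works with the general $\grc{p}{\H}$ and deduces proper discontinuity on the Grassmannian from proper discontinuity on the unit sphere via (the complex analogue of) Lemma \ref{gp1}, going through the Stiefel fibration. Your route is slightly more elementary and self-contained in the $p=1$ case; the paper's route is more uniform in $p$ and yields examples with nonconstant (but still positive) holomorphic sectional curvature. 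One small point: your $c_0$-argument in fact shows $\langle\lambda(a_n)v,w\rangle\to 0$ for \emph{every} $w$, which is what rules out convergence of $[\lambda(a_n)v]$ to any point, not just to $[v]$; this is exactly what Proposition \ref{dp} requires, so you need not defer to \cite{bi} here.
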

Note that the last part of the Theorem is in contrast with the finite dimensional case since a finite dimensional K\"ahler manifold with positive holomorphic sectional curvature is compact and simply connected \cite{tsukamoto}.
Finally we classify homogeneous Hilbert manifolds of constant sectional curvature and we give a new example of an infinite dimensional complete Hilbert manifolds of negative constant sectional curvature whose fundamental group is isomorphic to $\Z^k=\underbrace{\Z \oplus \cdots \oplus \Z}_k$ for every $k\in \N$.

The paper is organized as follows. In Section \ref{prelim} we briefly discuss properly discontinuous actions on Hilbert manifolds and the geometry of the Stiefel and Grassmann manifolds.
From Section \ref{proof1} to Section \ref{homogenous} we prove our main results.
\section{Preliminaries}\label{prelim}
\subsection{Properly discontinuous actions}
In this section we will briefly discuss properly discontinuous actions on a Hilbert manifold. Most of the results hold in a more general context.

Let $G$ be an abstract group and let $M$ be a Hilbert manifold. An action of $G$ on $M$ is a map $\rho:G \times M \lra M$ such that: $\rho(g,\cdot)$ is a diffeomorphism,
$\rho(e,m)=m$ and $\rho(g_1 g_2, m)=\rho (g_1,\rho(g_2,m))$ for every $g,g_1, g_2 \in G$ and for every $m\in M$, where $e \in G$ denotes the  neutral element of $G$.
 Throughout this article we always denote  $\mu(g,m)=g(m)$. The subgroup $G_m=\{g\in G:\  g(x)=x\}$ is called the isotropy group of $x$. The orbit throughout $m$ is the set $G(m)=\{ g(m):\, g\in G \}$. One say the $G$ action is \emph{transitive} if $G$ has just one orbit, i.e., for every $p,q \in M$, there exists $g\in M$ such that $g(p)=q$.  The $G$-action is called \emph{effective}, if $g(m)=m$ for every $m\in M$, implies $g=e$. Hence $G$ acts effectively on $M$ if and only if $\bigcap_{x\in M} G_x =\{e\}$; if $G_x=\{e\}$ for every $x\in M$, we say that $G$ acts freely on $M$. Note that any action can be reduced to an effective action. Indeed, $N=\bigcap_{x\in M} G_x$ is a normal subgroup and $G/N$ acts effectively on $M$.

A $G$ action on $M$ is called \emph{properly discontinuous} if the following conditions hold:
\begin{enumerate}
\item for every $m \in M$, there exists an open neighborhood $U$ of $x$ such that $g(U)\cap U=\emptyset$ for every $g\neq e$;
\item for every $y \notin  G(x)$, there exist neighborhoods $U$ and $V$ of $x\in U$ and $y\in V$ respectively such that  $g(U)\cap V=\emptyset$ for every $g\in G$.
\end{enumerate}
The second condition means that the orbit space  $M/G$ is Hausdorff.
Note that a properly discontinuous action is free and a finite group $G$ acts properly discontinuously on a manifold $M$ if and only if it acts freely on $M$.
The following result is well-known \cite{Kn}.
\begin{prop}
Let $G$ be a group acting properly discontinuously on an Hilbert manifold $M$. Then
orbit space $M/G$ admits a differential structure such that $\pi:M \lra M/G$ is a covering map.
\end{prop}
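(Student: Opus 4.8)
The plan is to show directly that a properly discontinuous action provides the two pieces of data needed to build a manifold quotient: an atlas of charts on $M/G$ and a verification that transition maps are smooth, together with the covering property.

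First I would establish the key local triviality: fix $m \in M$ and use condition (a) to obtain an open set $U \ni m$ with $g(U) \cap U = \emptyset$ for all $g \neq e$. Shrinking $U$ if necessary, I may assume $U$ is the domain of a chart $\varphi : U \to \H$ of the Hilbert manifold structure. The restriction of the projection $\pi : M \lra M/G$ to $U$ is then injective: if $\pi(x) = \pi(y)$ with $x, y \in U$, then $y = g(x)$ for some $g \in G$, so $g(U) \cap U \neq \emptyset$, forcing $g = e$ and $x = y$. Moreover $\pi|_U$ is open (since $\pi$ is open, being the quotient by a group action: $\pi^{-1}(\pi(V)) = \bigcup_{g \in G} g(V)$ is open for any open $V$), hence $\pi|_U : U \to \pi(U)$ is a homeomorphism onto an open subset of $M/G$. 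This lets me define a chart on $M/G$ by $\psi := \varphi \circ (\pi|_U)^{-1} : \pi(U) \to \H$.

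Next I would check the transition maps. Given two such charts $\psi_1 = \varphi_1 \circ (\pi|_{U_1})^{-1}$ and $\psi_2 = \varphi_2 \circ (\pi|_{U_2})^{-1}$ with $\pi(U_1) \cap \pi(U_2) \neq \emptyset$, a point in the overlap has preimages $x_1 \in U_1$ and $x_2 \in U_2$ with $x_2 = g(x_1)$ for a \emph{locally constant} $g$ (continuity of the transition forces $g$ to be constant on connected pieces of the overlap, using the Hausdorff separation in condition (b) to rule out jumps). Then $\psi_2 \circ \psi_1^{-1} = \varphi_2 \circ \mu(g,\cdot) \circ \varphi_1^{-1}$ on that piece, which is smooth because $\mu(g,\cdot)$ is a diffeomorphism of $M$ and $\varphi_1, \varphi_2$ are smooth charts. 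Thus the charts $\psi$ form a smooth atlas, and $M/G$ is a Hilbert manifold modeled on $\H$; with this structure $\pi$ is smooth and is a local diffeomorphism since in the charts $\varphi, \psi$ it is the identity. Hausdorffness of $M/G$ is exactly the content of condition (b): given distinct $\pi(x) \neq \pi(y)$, i.e. $y \notin G(x)$, the sets $\pi(U)$ and $\pi(V)$ are disjoint open neighborhoods.

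Finally, for the covering property I would show each $\pi(U)$ as above is evenly covered: $\pi^{-1}(\pi(U)) = \bigsqcup_{g \in G} g(U)$, and this union is disjoint because $g(U) \cap h(U) \neq \emptyset$ implies $(h^{-1}g)(U) \cap U \neq \emptyset$, so $h = g$ by condition (a); moreover $\pi$ restricted to each $g(U)$ is a homeomorphism onto $\pi(U)$ (it equals $\pi|_U \circ \mu(g^{-1}, \cdot)$). Hence $\pi$ is a covering map. The main obstacle is the bookkeeping in the transition-map step — specifically arguing that the group element $g$ relating overlapping charts is locally constant, which is where the Hausdorff hypothesis (b) is genuinely used rather than just (a); everything else is a routine unwinding of definitions.
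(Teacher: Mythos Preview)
Your argument is correct and follows the standard route. Note, however, that the paper does not actually supply its own proof of this proposition: it simply records the result as ``well-known'' and cites Kobayashi--Nomizu. So there is no paper proof to compare against; what you have written is essentially the textbook construction that the citation points to.

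One small remark on your transition-map step: the local constancy of the group element $g$ does not really require condition~(b). Once you know $\pi|_{U_2}$ is injective (which follows from condition~(a) applied to $U_2$), any two elements $g,h$ with $g(x_1'), h(x_1') \in U_2$ must satisfy $g(x_1')=h(x_1')$ and hence $g=h$ by freeness; continuity then gives local constancy. Condition~(b) is used exactly where you later say it is --- for Hausdorffness of $M/G$ --- and nowhere else. This is only a matter of attribution, not a gap.
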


Assume now that $G$ acts freely on $M$. We say that $G$ acts \emph{discontinuously} on $M$ if for every $x\in M$ and every sequence $a_n$ (where $a_n$ are all mutually distinct) then $a_n (x)$ does not converge.
Hence a properly discontinuous action is discontinuous.

We will say that $G$ acts isometrically on $\riem{M}{\euclideo}$ if the transformation $g:M\lra M$ is an isometry of $M$ for every $g\in G$.
The following result is  a useful criteria for properly discontinuous actions \cite{Kn}.
\begin{prop}\label{dp}
If $G$ acts discontinuously and isometrically on $M$, the action is properly discontinuous. In this case $M/G$ admits a Riemannian structure such that the natural projection
$\pi:M\lra M/G$ is a Riemannian covering map. Moreover, if $M$ is a complete Hilbert manifold  so is $M/G$.
\end{prop}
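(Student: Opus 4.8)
The plan is to verify the two defining conditions of a properly discontinuous action, then descend the metric to $M/G$, and finally transfer completeness by lifting a rectifiable Cauchy path — the point being that Hopf--Rinow is not available in infinite dimensions, so the usual geodesic argument must be replaced by a metric one. For the first condition, recalling that a discontinuous action is free by hypothesis, suppose some $x\in M$ had no neighborhood $U$ with $g(U)\cap U=\emptyset$ for all $g\neq e$; letting $U_n$ be the metric ball of radius $1/n$ about $x$, we would get for each $n$ an element $g_n\neq e$ and points $a_n,b_n\in U_n$ with $g_n(a_n)=b_n$, so by isometry invariance of the distance
\[
d\bigl(g_n(x),x\bigr)\le d\bigl(g_n(x),g_n(a_n)\bigr)+d(b_n,x)=d(a_n,x)+d(b_n,x)<\tfrac{2}{n},
\]
whence $g_n(x)\to x$. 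If a fixed $g$ occurred infinitely often among the $g_n$ we would get $g(x)=x$ with $g\neq e$, contradicting freeness; so, after extracting a subsequence, the $g_n$ are mutually distinct while $g_n(x)$ converges, contradicting discontinuity. The second condition is proved the same way: were it to fail for some $x$ and some $y\notin G(x)$, taking $U_n,V_n$ the balls of radius $1/n$ about $x,y$ yields $g_n\in G$ and $a_n\in U_n$ with $g_n(a_n)\in V_n$, hence $d(g_n(x),y)\le d(a_n,x)+d(g_n(a_n),y)<\tfrac{2}{n}$ and $g_n(x)\to y$; a fixed $g$ occurring infinitely often would force $y=g(x)\in G(x)$, so a mutually distinct subsequence again contradicts discontinuity. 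Thus the action is properly discontinuous, and by the Proposition preceding this one $M/G$ carries a differentiable structure for which $\pi$ is a covering map, hence a local diffeomorphism.

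For the Riemannian structure, fix $\bar p\in M/G$, pick $p\in\pi^{-1}(\bar p)$ and, using the first condition, an open $U\ni p$ with $g(U)\cap U=\emptyset$ for $g\neq e$, so that $\pi|_U$ is a diffeomorphism onto an open set; transport the metric of $M$ to $\pi(U)$ via $(\pi|_U)^{-1}$. On overlaps of two such charts the transition map of $M$ is the restriction of some $g\in G$, which is an isometry, so the local metrics agree; they glue to a smooth metric on $M/G$ for which $\pi$ is a local isometry, and the equivalence with the ambient Hilbert norm on each tangent space is inherited from $M$. Hence $\pi\colon M\lra M/G$ is a Riemannian covering. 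Now assume $M$ complete. Given a Cauchy sequence in $M/G$, pass to a subsequence $(\bar q_n)$ with $d(\bar q_n,\bar q_{n+1})<2^{-n}$, choose paths $\bar\gamma_n$ from $\bar q_n$ to $\bar q_{n+1}$ of length $<2^{-n}$, and concatenate them into a path $\bar\gamma\colon[0,1)\lra M/G$ passing through the $\bar q_n$ at parameters $s_n\uparrow1$ with $\mathrm{length}(\bar\gamma|_{[s_n,1)})<2^{-n+1}$. Lift $\bar\gamma$ to $\gamma\colon[0,1)\lra M$ with $\gamma(0)\in\pi^{-1}(\bar q_1)$ by path lifting for the covering $\pi$; since $\pi$ is a local isometry, $\gamma$ has the same length on each subinterval, so $d(\gamma(s),\gamma(t))\le\mathrm{length}(\gamma|_{[s_n,1)})<2^{-n+1}$ whenever $s_n\le s\le t<1$. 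Thus $\gamma(s)$ is Cauchy as $s\to1^-$ and converges, by completeness of $M$, to some $q_\infty$; then $\bar q_n=\pi(\gamma(s_n))\to\pi(q_\infty)$, so the original Cauchy sequence converges and $M/G$ is complete.

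I expect the main obstacle to be the completeness claim: one cannot invoke Hopf--Rinow or lift geodesics as in the finite-dimensional case, and the argument must instead lift a concatenation of short paths of finite total length, using only that a covering map lifts paths and that a local isometry preserves their length. The subsidiary subtlety is the extraction of a subsequence of pairwise distinct group elements in the two proper-discontinuity arguments, which is precisely where freeness (built into the notion of discontinuous action) and the isometric hypothesis are genuinely used; the remaining verification — gluing the descended metric — is routine.
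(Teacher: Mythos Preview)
Your proof is correct. The paper, however, does not supply its own argument for this proposition: it attributes the first two assertions to Kobayashi--Nomizu \cite{Kn} and the completeness claim to \cite{bi2}, so there is no in-text proof to compare against. What you have written is a clean, self-contained verification that would stand in place of those citations.

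A few remarks on how your argument relates to the standard ones. Your treatment of the two proper-discontinuity conditions via shrinking metric balls and the triangle inequality is exactly the classical approach one finds in \cite{Kn} (adapted to the Riemannian distance), and the descent of the metric is routine. The interesting part is completeness: you correctly flag that Hopf--Rinow fails in infinite dimensions, so one cannot lift minimizing geodesics between consecutive terms of the Cauchy sequence. Your replacement --- concatenating short rectifiable paths of summable length and lifting the concatenation through the covering --- is the right workaround and is essentially the argument in \cite{bi2}. The one step worth making explicit is that from $d(\bar q_n,\bar q_{n+1})<2^{-n}$ you can indeed pick a path of length $<2^{-n}$ because the Riemannian distance is the infimum of path lengths; you use this implicitly but it is the point where the metric structure genuinely enters.
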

The last part of the above proposition is proved in \cite{bi2}.
\subsection{Infinite dimensional Stiefel and Grassmann manifolds}\label{sectionstiefel}
In this section we will briefly investigate the Riemannian geometry of the Stiefel manifolds with respect to the euclidian metric and with respect to a Riemannian metric which is called the \emph{canonical metric}.
We also study the Riemannian geometry of the Grassmannian manifolds.

Let $\H$ be a Hilbert space and let $p$ be a positive number. $\lin{p}{\H}$ denotes the set of linear maps from $\R^p$ into $\H$. This is a Hilbert space endowed by the following Hilbert product: $\langle x , y \rangle=\mathrm{Tr}(x^t \circ y)$. Here $x^t \in \mathrm{L}(\H, \R^p )$ denotes the transpose with respect to the metric on $\H$ and on $\R^p$ respectively, i.e.
\[
\langle x(v),Z\rangle_{\H} =\langle v, x^t (Z) \rangle_{\R^p},
\]
for any $v\in \R^p$ and for any $Z\in \H$.\ In particular we have the following orthogonal decomposition:
\[
\H=\mathrm{Im}\, x \oplus \mathrm{Ker}\, x^t.
\]
The Stiefel manifold $\st{p}{\H}$  is the set of linear isometric immersion of $\R^p$ into $\H$. Equivalently,  $\st{p}{\H}=\{x\in \lin{p}{\H}:\, x^t \circ x =\id{p}\}$ and so it is a closed smooth submanifold of finite codimension $\frac{1}{2}p(p+1)$ of $\lin{p}{\H}$. The Hilbert product $\euclideo$ on the Hilbert space $\lin{p}{\H}$ induces a Riemannian metric on $\st{p}{\H}$ such that
$\riem{ \st{p}{\H}}{\euclideo}$ is a complete Hilbert manifold.
The tangent space at $Y$ is given by: $T_Y \st{p}{\H}=\{V:\R^p \lra \H:\ Y^t \circ V$ is skew symmetric$\}$.
If $\H$ has finite dimensional, we tacitely assume that $\dim \H \geq 2p$.

If $T:\H \lra \H$ is a linear map, then $T$ induces in a natural way a linear map $\tilde T$  on $\lin{p}{\H}$, by setting
\[
\tilde T (x)=T \circ x.
\]
It is easy to check that if $T:\H \lra \H$ is an isometry, then $\tilde T$ is an isometry of $\lin{p}{\H}$ preserving $\st{p}{\H}$. Therefore $\tilde T$ is also an isometry of $\riem{\st{p}{\H}}{\euclideo}$.
In particular,  if $H$ acts linearly and by isometry on $\H$, then it also acts isometrically  on $\st{p}{\H}$, by setting
\[
\mu:H \times \st{p}{\H} \lra \st{p}{\H}\ \ \ (g,x) \mapsto \tilde g (x).
\]
Hence the orthogonal group $\mathrm{O}(\H)$ of $\H$ acts isometrically and transitively on $\st{p}{\H}$.
More generally, if $T:V \lra W$ is a linear isometric immersion, then the  map
\[
\tilde T:\st{p}{V}\lra \st{p}{W},\ \ x\mapsto T \circ x.
\]
is smooth and the following result holds \cite{hm}.
\begin{lemma}\label{sg1}
$\tilde T:\st{p}{V}\lra \st{p}{W}$ is an isometric embedding and $\tilde T (\st{p}{V} )$ is a totally geodesic submanifold of $\riem{\st{p}{W}}{\euclideo}$.
\end{lemma}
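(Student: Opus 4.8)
The plan is to split the statement into two essentially independent parts: that $\tilde T$ is an isometric embedding is pure linear algebra in the ambient spaces of linear maps, while the total geodesy of the image will follow from the classical fact that the fixed--point set of an isometry is totally geodesic, once we exhibit $\tilde T(\st{p}{V})$ as such a set. The one place where infinite dimensionality could be a worry — local existence and uniqueness of geodesics — causes no trouble, since the geodesic equation is an ODE to which the Picard--Lindel\"of theorem applies in Hilbert space.

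First I would note that $x\mapsto T\circ x$ is the restriction to $\st{p}{V}$ of the bounded linear map $\Phi\colon\lin{p}{V}\lra\lin{p}{W}$, and that $\Phi$ is a linear isometry onto a closed subspace: since $T$ is a linear isometric immersion we have $T^t\circ T=\mathrm{Id}_V$, hence $\langle\Phi(x),\Phi(y)\rangle=\mathrm{Tr}(x^t\circ T^t\circ T\circ y)=\mathrm{Tr}(x^t\circ y)=\langle x,y\rangle$. The same identity gives $\Phi(\st{p}{V})\subseteq\st{p}{W}$, and since the metrics on both Stiefel manifolds are by definition those induced from $\euclideo$ on $\lin{p}{V}$ and $\lin{p}{W}$, the restriction of the linear isometry $\Phi$ to the submanifold $\st{p}{V}$ is an isometric embedding onto the (closed) submanifold $\tilde T(\st{p}{V})=\{z\in\st{p}{W}: \mathrm{Im}\,z\subseteq\mathrm{Im}\,T\}$.

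For the second part, set $P=T\circ T^t\in\mathrm{L}(W,W)$, which is the orthogonal projection onto $\mathrm{Im}\,T$ (it is self--adjoint, $P^2=T\circ(T^t\circ T)\circ T^t=P$, and it is the identity on $\mathrm{Im}\,T$), and let $\sigma=2P-\mathrm{Id}_W$ be the reflection across $\mathrm{Im}\,T$. Then $\sigma=\sigma^t$ and $\sigma^2=\mathrm{Id}_W$, so $\sigma$ is a linear isometry of $W$ and induces an isometry $\tilde\sigma$ of $\riem{\st{p}{W}}{\euclideo}$, $\tilde\sigma(x)=\sigma\circ x$. I would then check that $\tilde\sigma(x)=x$ iff $\mathrm{Im}\,x\subseteq\mathrm{Im}\,T$, and that for such an $x$ the map $y:=T^t\circ x$ lies in $\st{p}{V}$ (a one--line computation, since $y^t\circ y=x^t\circ P\circ x=x^t\circ x=\id{p}$) with $\tilde T(y)=P\circ x=x$; hence $\tilde T(\st{p}{V})=\mathrm{Fix}(\tilde\sigma)$.

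It then remains to run the usual argument that $\mathrm{Fix}(\tilde\sigma)$ is totally geodesic. At $x_0=T\circ y_0\in\tilde T(\st{p}{V})$ I would verify, using the description $T_{x_0}\st{p}{W}=\{A: x_0^t\circ A \text{ skew-symmetric}\}$ together with $\mathrm{Im}\,x_0\subseteq\mathrm{Im}\,T$, that the tangent space $T_{x_0}\tilde T(\st{p}{V})=\{A\in T_{x_0}\st{p}{W}: \mathrm{Im}\,A\subseteq\mathrm{Im}\,T\}$ is exactly the $(+1)$--eigenspace of the linear isomorphism $d\tilde\sigma_{x_0}$ of $T_{x_0}\st{p}{W}$. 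Given $v$ in this eigenspace, the geodesic $\gamma$ of $\st{p}{W}$ with $\gamma(0)=x_0$, $\gamma'(0)=v$ and the curve $\tilde\sigma\circ\gamma$ are both geodesics with the same initial data, so uniqueness forces $\tilde\sigma\circ\gamma=\gamma$, i.e. $\gamma$ stays in $\mathrm{Fix}(\tilde\sigma)=\tilde T(\st{p}{V})$; this is exactly total geodesy. The main obstacle, such as it is, is the eigenspace identification — it is what guarantees the uniqueness argument covers \emph{all} directions tangent to the image rather than a proper subspace — plus the (routine but worth stating) remark that the Levi--Civita connection and local geodesic existence/uniqueness carry over unchanged to Hilbert manifolds. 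One could instead bypass the symmetry and compute the second fundamental form of $\st{p}{V}$ in $\st{p}{W}$ from the explicit geodesics of the embedded metric in \cite{eas}, but the fixed--point argument is shorter and cleaner.
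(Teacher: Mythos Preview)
Your proposal is correct and follows essentially the same route as the paper. The paper does not give a proof of this lemma (it is attributed to \cite{hm}), but the analogous Lemma~\ref{l3} for the canonical metric is proved precisely by your method: one identifies $V$ with $T(V)\subseteq W$, introduces the reflection across $T(V)$ (your $\sigma=2P-\mathrm{Id}_W$, the paper's $T_{|_W}=\mathrm{Id}$, $T_{|_{W^\perp}}=-\mathrm{Id}$), checks that $\tilde T(\st{p}{V})$ is its fixed-point set, and invokes the classical fact that fixed-point sets of isometries are totally geodesic. The only difference is one of detail: where the paper cites \cite{Kl,Kn} for the last step, you spell out the eigenspace identification and the geodesic-uniqueness argument in full, and you also include the explicit verification that $\mathrm{Fix}(\tilde\sigma)\subseteq\tilde T(\st{p}{V})$ via $y=T^t\circ x$.
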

In \cite{hm} the authors proved that $\riem{\st{p}{\H}}{\euclideo}$ is \emph{Hopf-Rinow} whenever $\H$ is separable. This means that any two points can be connected by a minimizing geodesic. The next result shows that the hypothesis of separability is not necessary.
\begin{prop}\label{hopf-rinow}
Let $\H$ be a Hilbert space. Then $\riem{\st{p}{\H}}{\euclideo}$ is Hopf-Rinow.
\end{prop}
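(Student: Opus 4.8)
The plan is to reduce the assertion to the separable case, which is exactly the content of the theorem of \cite{hm}, using Lemma \ref{sg1} as the bridge. The naive attempt — pick a separable subspace $V$ with $\mathrm{Im}\,x+\mathrm{Im}\,y\subseteq V$, so that $x,y\in\st{p}{V}$ and, by Lemma \ref{sg1}, $\st{p}{V}$ is a Hopf--Rinow totally geodesic submanifold of $\st{p}{\H}$ — does not work as it stands, because a minimizing geodesic of a totally geodesic submanifold need not be minimizing in the ambient space. The extra input is to choose $V$ so that the intrinsic distances $d_{\st{p}{V}}(x,y)$ and $d_{\st{p}{\H}}(x,y)$ coincide; this is the only delicate point.

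First I would record two elementary remarks. (1) If $V\subseteq V'$ are closed subspaces of $\H$, then applying Lemma \ref{sg1} to the inclusion $V\hookrightarrow V'$ (resp. $V\hookrightarrow\H$) shows that $\st{p}{V}\hookrightarrow\st{p}{V'}$ (resp. $\st{p}{V}\hookrightarrow\st{p}{\H}$) is an isometric embedding; in particular lengths of curves are preserved and $d_{\st{p}{V}}(x,y)\ge d_{\st{p}{V'}}(x,y)\ge d_{\st{p}{\H}}(x,y)$ for $x,y\in\st{p}{V}$. (2) Any continuous curve $\sigma\colon[0,1]\to\st{p}{\H}$ lies in $\st{p}{V_\sigma}$ for some separable closed subspace $V_\sigma$: indeed, fixing a basis $e_1,\dots,e_p$ of $\R^p$, each $t\mapsto\sigma(t)(e_i)$ is a continuous curve in $\H$ and hence has separable image, so one may take $V_\sigma=\overline{\mathrm{span}}\,\bigcup_{i,\,t}\{\sigma(t)(e_i)\}$, which also contains $\mathrm{Im}\,\sigma(0)+\mathrm{Im}\,\sigma(1)$.

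Now fix $x,y\in\st{p}{\H}$ and choose curves $\sigma_n$ from $x$ to $y$ in $\st{p}{\H}$ with $L(\sigma_n)\to d_{\st{p}{\H}}(x,y)$. Put $V_0=\overline{\mathrm{span}}\,\bigcup_n V_{\sigma_n}$, a separable closed subspace (closed span of countably many separable sets) containing $\mathrm{Im}\,x+\mathrm{Im}\,y$, and enlarge it if necessary to a separable closed subspace $V\supseteq V_0$ with $\dim V\ge 2p$, which is possible since $\dim\H\ge 2p$. Then $x,y\in\st{p}{V}$, every $\sigma_n$ is a curve in $\st{p}{V}$ whose length is unchanged by remark (1), so $d_{\st{p}{V}}(x,y)\le\inf_n L(\sigma_n)=d_{\st{p}{\H}}(x,y)$; together with the reverse inequality from remark (1) this yields $d_{\st{p}{V}}(x,y)=d_{\st{p}{\H}}(x,y)$.

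Finally, $V$ is separable with $\dim V\ge 2p$, so $\riem{\st{p}{V}}{\euclideo}$ is Hopf--Rinow by \cite{hm} (in the case $\dim V<\infty$ this is the classical Hopf--Rinow theorem, $\st{p}{V}$ being compact). Hence there is a geodesic $\gamma$ of $\st{p}{V}$ from $x$ to $y$ with $L(\gamma)=d_{\st{p}{V}}(x,y)$. By Lemma \ref{sg1}, $\st{p}{V}$ is totally geodesic in $\riem{\st{p}{\H}}{\euclideo}$, so $\gamma$ is also a geodesic of $\st{p}{\H}$, and $L(\gamma)=d_{\st{p}{V}}(x,y)=d_{\st{p}{\H}}(x,y)$ shows it is minimizing there. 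The main obstacle is precisely the construction of $V$ in the previous paragraph: one must build it from a minimizing sequence of curves rather than from an arbitrary separable subspace through $x$ and $y$, since otherwise the minimizing geodesic produced in $\st{p}{V}$ could fail to be minimizing in $\st{p}{\H}$.
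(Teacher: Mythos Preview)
Your proof is correct. Both you and the paper reduce to the separable case via Lemma \ref{sg1} and \cite{hm}, and both hinge on producing a separable closed subspace $V$ with $d_{\st{p}{V}}(x,y)=d_{\st{p}{\H}}(x,y)$; the difference is in how that $V$ is found. The paper invokes Ekeland's theorem \cite{ek} to obtain a sequence $y_n\to y$ joined to $x$ by unique minimizing geodesics, notes (via \cite{eas}) that each such geodesic lies in a finite-dimensional Stiefel manifold, and lets $W$ be a separable subspace containing all of this data; the equality of distances then follows from $d_W(x,y_n)=d_\H(x,y_n)$ and continuity. You bypass Ekeland entirely: a length-minimizing sequence of arbitrary \emph{curves} already lives in a separable subspace by the elementary observation that continuous images of $[0,1]$ are separable, and this yields the distance equality directly. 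Your route is more elementary --- it needs neither Ekeland's theorem nor the explicit description of Stiefel geodesics --- and is arguably cleaner; the paper's route, on the other hand, makes visible the finite-dimensional structure of geodesics that is reused later (e.g.\ in Corollary \ref{sc1}).
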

\begin{proof}
Let $x,y\in \st{p}{\H}$. By Ekeland Theorem \cite{ek}, there exists a sequence $y_n$ converging to $y$ such that: there exists a unique minimizing geodesic joining $x$ and $y_n$; $\lim_{n\mapsto \infty} d(x,y_n) = d(x,y)$. Now, any geodesic joining $x$ and $y_n$ belongs to a finite dimensional Stiefel manifolds \cite{eas,hm}. Hence there exits a separable Hilbert space $W$ such that: $x,y,y_n \in \st{p}{W}$ as well as the minimizing  geodesic joining $x$ and $y_n$. By \cite{hm} there exists a minimizing geodesic joining $x$ and $y$. Since $\st{p}{W}$ is totally geodesic in $\st{p}{\H}$ then the unique minimizing geodesic joining $x$ and $y_n$ is also the unique minimizing geodesic in $\st{p}{W}$ as well. Now, keeping in mind that $d(x,y_n)$ is the same either $\st{p}{\H}$ or  $\st{p}{W}$,   we get that the minimizing geodesic in $\st{p}{W}$ is a minimizing geodesic in $\st{p}{\H}$ concluding the proof.
\end{proof}
The orthogonal group $\mathrm{O}(p)$ acts freely and isometrically on $\riem{\st{p}{\H}}{\euclideo}$ as follows
\[
\mathrm{O}(p) \times \st{p}{\H} \lra \st{p}{\H},\ (A,x) \mapsto x \circ A^t.
\]
The quotient space $\gr{p}{\H}=\st{p}{\H}/ \mathrm{O}(p)$ is the Grassmann manifold of the $p$-dimensional subspaces of $\H$. It can be endowed by a Riemannian structure, that we also denote by $\langle \cdot, \cdot \rangle$, such that the natural projection $\pi:\st{p}{\H} \lra \gr{p}{\H}$ is a Riemannian submersion.
Since the $\mathrm{O}(\H)$ action on $\st{p}{\H}$  commutes with the $\mathrm{O}(p)$ action, then $\mathrm{O}(\H )$ acts isometrically and transitively also on $\gr{p}{\H}$. In particular $\gr{p}{\H}$\ is {\em homogeneous}.
\begin{prop}\label{grassmannian-complete}
Let $\H$ be a Hilbert space. Then  $\riem{\gr{p}{\H}}{\euclideo}$  is a complete Hilbert manifold satisfying the Hopf-Rinow theorem.
\end{prop}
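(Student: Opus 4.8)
The plan is to exploit that $\pi\colon\st{p}{\H}\lra\gr{p}{\H}$ is a Riemannian submersion whose fibres are the orbits of the $\mathrm{O}(p)$-action, hence compact (being continuous images of the compact group $\mathrm{O}(p)$), together with the completeness and Hopf-Rinow property of $\riem{\st{p}{\H}}{\euclideo}$ proved in Proposition~\ref{hopf-rinow}. Write $d_S$ and $d_G$ for the Riemannian distances on $\st{p}{\H}$ and $\gr{p}{\H}$. The first step is the identity
\[
d_G(\pi(x),\pi(y))=d_S\bigl(x,\pi^{-1}(\pi(y))\bigr)\qquad\text{for all }x,y\in\st{p}{\H},
\]
which follows from the general behaviour of Riemannian submersions with complete total space: every curve in the base lifts to a horizontal curve of the same length starting at any prescribed point of the fibre, while every curve upstairs projects to a curve of length not greater. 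Horizontal lifts are defined for all parameter values because, $\mathrm{O}(p)$ being compact, $\pi$ is a locally trivial principal bundle and the local horizontal lift equations live in $\mathrm{O}(p)$, so their solutions do not escape in finite time. Taking infima over curves and over the fibre gives the identity.

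Granting this, Hopf-Rinow for $\gr{p}{\H}$ goes as follows. Given $[x],[y]\in\gr{p}{\H}$, fix a lift $x$ and put $F=\pi^{-1}([y])$, a compact submanifold of $\st{p}{\H}$. Choosing $z_n\in F$ with $d_S(x,z_n)\to d_S(x,F)$ and extracting a convergent subsequence $z_n\to z\in F$ gives $d_S(x,z)=d_S(x,F)=d_G([x],[y])$; by Proposition~\ref{hopf-rinow} there is a minimizing geodesic $c\colon[0,1]\lra\st{p}{\H}$ from $x$ to $z$. Since $z$ is a point of $F$ closest to $x$, the first variation formula forces $c'(1)\perp T_zF$, i.e. $c'(1)$ is horizontal; and a geodesic that is horizontal at one point is horizontal everywhere, since it agrees there, in position and velocity, with the horizontal lift of the base geodesic determined by $d\pi(c'(1))$. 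Hence $\pi\circ c$ is a geodesic of $\gr{p}{\H}$ from $[x]$ to $[y]$ with length $L(\pi\circ c)=L(c)=d_G([x],[y])$, so it is minimizing.

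For metric completeness, let $([x_n])$ be a Cauchy sequence in $\gr{p}{\H}$; passing to a subsequence we may assume $d_G([x_n],[x_{n+1}])<2^{-n}$. Using the distance identity and the compactness of the fibres, choose lifts inductively: $x_1\in\pi^{-1}([x_1])$ arbitrary and, once $x_n$ is chosen, $x_{n+1}\in\pi^{-1}([x_{n+1}])$ with $d_S(x_n,x_{n+1})=d_S\bigl(x_n,\pi^{-1}([x_{n+1}])\bigr)=d_G([x_n],[x_{n+1}])<2^{-n}$. Then $(x_n)$ is Cauchy in $\st{p}{\H}$, so it converges to some $x$ by Proposition~\ref{hopf-rinow}, whence $[x_n]=\pi(x_n)\to\pi(x)$; since $([x_n])$ is Cauchy, the whole original sequence converges to $\pi(x)$.

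I expect the only genuine subtlety to be the first step, namely the existence of global horizontal lifts and the resulting identification of the submersion distance with the distance between fibres; this is exactly where the finite dimensionality, hence compactness, of $\mathrm{O}(p)$ enters, ruling out the pathologies that afflict general infinite dimensional submersions. As an alternative that bypasses this discussion, one can imitate the proof of Proposition~\ref{hopf-rinow}: a horizontal geodesic of $\st{p}{\H}$ lies in a finite dimensional Stiefel submanifold by \cite{eas,hm}, so every geodesic of $\gr{p}{\H}$ lies in a finite dimensional Grassmannian, and one then concludes with the finite dimensional Hopf-Rinow theorem and Ekeland's theorem exactly as before.
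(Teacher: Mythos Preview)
Your argument is correct, and it is genuinely different from the paper's. You exploit the Riemannian submersion $\pi\colon\st{p}{\H}\to\gr{p}{\H}$ with compact fibres to transfer both completeness and the Hopf--Rinow property downward from the Stiefel manifold: the distance identity $d_G(\pi(x),\pi(y))=d_S(x,\pi^{-1}(\pi(y)))$ plus compactness of the $\mathrm{O}(p)$-orbits lets you lift Cauchy sequences and realise minimizers as projections of horizontal geodesics. The paper instead proves completeness by an entirely different trick: it invokes Ekeland's theorem to get that sufficiently small closed geodesic balls are complete, then uses the transitive $\mathrm{O}(\H)$-action (homogeneity) to trap any Cauchy sequence inside such a ball; for Hopf--Rinow it appeals to \cite{hm} and the finite-dimensional reduction of Proposition~\ref{hopf-rinow}, which is essentially the alternative route you sketch at the end.

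Your approach is more robust in that it works for any Riemannian submersion with compact fibres over a complete Hopf--Rinow total space, without needing homogeneity of the base or Ekeland's theorem. The paper's approach, on the other hand, sidesteps the infinite-dimensional submersion analysis entirely (global horizontal lifts, O'Neill's fact that a geodesic horizontal at one point stays horizontal), trading it for the single external input of Ekeland plus homogeneity. One small remark: when you invoke Proposition~\ref{hopf-rinow} for the convergence of the lifted Cauchy sequence, what you actually need is just the metric completeness of $\riem{\st{p}{\H}}{\euclideo}$, which the paper establishes earlier (closed submanifold of a Hilbert space) rather than in that proposition.
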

\begin{proof} Let $p \in \gr{p}{\H}$. Consider the geodesic ball \[ B(p,\epsilon) = \{ q \in \gr{p}{\H}: d(p,q) < \epsilon\}\] where
$d$ is the distance function defined by the Riemannian metric. Then $\overline{B(p,\epsilon)}$ is a complete metric space, for $\epsilon$\ small (see \cite{ek}).\ Let $\{x_n\}$\ be a Cauchy sequence in $\gr{p}{\H}$.\ Then there exist $n_0$\ such that $x_n \in B(x_{n_o},\epsilon)$\ for every $n\geq  n_o$. Since $\gr{p}{\H}$ is homogeneous, $\overline{B(x_{n_o},\epsilon)}$ is a complete metric space as well.  Hence the sequence admits a convergent subsequence and, being Cauchy, it converges. We can apply now the results in \cite{hm} and the argument in Proposition \ref{hopf-rinow} to get the result.
\end{proof}
\begin{remark}
We point out that also Theorem 3  in \cite{hm} works for the Grassmann manifold $\riem{\gr{p}{\H}}{\euclideo}$.
\end{remark}

We can also consider the so called \emph{canonical metric} on the Stiefel manifold. For every $Y\in \st{p}{\H}$ we define
\[
g_{Y} (V,W)=\mathrm{Tr}(V^t ( \textrm{Id} -\frac{1}{2}YY^t)W).
\]
on $T_Y \st{p}{\H}$. Note that $L_Y= \textrm{Id}-\frac{1}{2}YY^t$ is linear endomorphism of $\H$ depending smoothly on $Y$. We check that $L_Y$ is an invertible operator whenever $Y\in \st{p}{\H}$.
Indeed, keeping in mind the  orthogonal splitting
\[
\H=\mathrm{Im}\, Y \oplus \mathrm{Ker}\, Y^t,
\]
we get $(L_Y)_{|_{\mathrm{Im}\, Y}}=\frac{1}{2} \textrm{Id}_{|_{\mathrm{Im}\, Y}}$ and $(L_Y )_{|_{\mathrm{Ker}\, Y^t}}=\textrm{Id}_{|_{ \mathrm{Ker} Y^t}}$ and so
$L_Y$ is an invertible operators whose norm satisfies $\frac{1}{2} \leq \parallel L_Y \parallel \leq 1$. Note also that $L_Y$ is a symmetric and positive definite operator of $\H$.
We know that the isomorphism $L_Y$ induces endomorphism $\tilde L_Y$ of $\lin{p}{\H}$, by setting
\[
\tilde L_Y: \lin{p}{\H} \lra \lin{p}{\H},\ \ V \mapsto L_Y \circ V.
\]
This is a continuous and invertible endomorphism whose inverse in given by: $(\tilde L_Y )^{-1}=\widetilde{ (L_Y)^{-1}}$. Since $g_{Y}=\langle \cdot , \tilde L_Y \cdot \rangle$, this proves that $g_{Y}$ is a nondegenerate bilinear form. We shall prove that $g_{Y}$ defines a Riemannian metric on $\st{p}{\H}$. Firstly, we check that $g_{Y}$ is a symmetric bilinear form.
\[
\begin{split}
g_{Y}(V,W)&=\mathrm{Tr}(V^t \tilde L_Y (W))\\
            &=\mathrm{Tr}(V^t (L_Y \circ W)) \\
            &=\mathrm{Tr}((L_Y \circ V)^t W) \ \mathrm{since}\ L_Y\ \mathrm{is\ symmetric}\\
            &=\mathrm{Tr}(W^t (L_Y \circ V)) \\
            &=g_{Y}(W,V).
\end{split}
\]
Now, we shall prove that $g_{Y}$ is a scalar product on $T_Y \st{p}{\H}$ which defines a norm equivalent to $\euclideo$. Indeed,
\[
\begin{split}
g_{Y}(V,V)&=\mathrm{Tr}(V^t V)-\frac{1}{2}\mathrm{Tr}(V^t Y Y^t V) \\
            &=\mathrm{Tr}(V^t V)-\frac{1}{2}\mathrm{Tr}((Y^t V)^t Y^t V)\\
            &\leq \langle V,V \rangle.
\end{split}
\]
where the last inequality follows from the fact that $\mathrm{Tr}((Y^t V)^t Y^t V)\geq 0$.
Moreover, if $e_1, \ldots, e_p$ is an orthonormal basis of $\R^p$, and we denote by $P_{\mathrm{Im}\, Y}$ the orthogonal projection on $\mathrm{Im}\, Y$, then
\[
\begin{split}
\mathrm{Tr}((Y^t V)^t Y^t V)&=\sum_{i=1}^p \langle Y^t V(e_i), Y^t V (e_i ) \rangle  \\
            &=\sum_{i=1}^p \langle Y^t P_{\mathrm{Im}\, Y} (V(e_i)), Y^t P_{\mathrm{Im}\, Y} (V (e_i ) )\rangle \\
            &\leq \sum_{i=1}^p \langle V(e_i),V(e_i) \rangle \\
            &=\langle V,V \rangle,
\end{split}
\]
where the inequality follows from the fact that $Y$ is a linear isometric immersion of $\R^p$ in $\H$. Therefore
\begin{equation}\label{e1}
\frac{1}{2} \langle V , V \rangle \leq g_{Y}(V,V) \leq  \langle V , V \rangle,
\end{equation}
which also means that $\riem{\st{p}{\H}}{g}$ is a complete Hilbert manifold since $\riem{\st{p}{\H}}{\euclideo}$ is. Moreover the following result holds.
\begin{lemma}\label{asc}
If  $T:\H \lra \H$ is an isometry, then the induced map $\tilde{T}$ is an isometry of $\riem{\st{p}{\H}}{g}$. Therefore if $G$ acts isometrically on $\H$, then  $G$ acts isometrically on $ \riem{\st{p}{\H}}{g}$ as follows:
\[
G \times  \st{p}{\H} \lra \st{p}{\H},\ \  (h,x)\mapsto \tilde h (x).
\]
Therefore $\mathrm{O}(\H)$ acts isometrically and transitively on $\riem{\st{p}{\H}}{g}$
\end{lemma}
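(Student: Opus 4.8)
The plan is to reduce the statement to a single conjugation identity. Writing $L_Y=\mathrm{Id}-\frac{1}{2}YY^t$ for the operator of $\H$ appearing in the definition of the canonical metric $g_Y=\langle\cdot,\tilde L_Y\cdot\rangle$, the whole lemma comes down to checking that $L_{\tilde T Y}=T\circ L_Y\circ T^{-1}$, which in turn rests on the elementary fact that an isometry $T$ of the Hilbert space $\H$ satisfies $T^t=T^{-1}$ for the transpose taken with respect to the inner product of $\H$. Once this is in place, everything else is formal.

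First I would record the behaviour of $\tilde T$ and its differential. The map $\tilde T$ is the restriction to $\st{p}{\H}$ of the bounded linear endomorphism $V\mapsto T\circ V$ of $\lin{p}{\H}$; as already observed before Lemma \ref{sg1}, since $T$ is an isometry $\tilde T$ preserves $\st{p}{\H}$, and because it is linear its differential at any $Y$ is again $\mathrm{d}\tilde T_Y(V)=T\circ V=\tilde T V$, which carries $T_Y\st{p}{\H}$ onto $T_{\tilde T Y}\st{p}{\H}$. So to prove that $\tilde T$ is an isometry of $\riem{\st{p}{\H}}{g}$ it suffices to show $g_{\tilde T Y}(\tilde T V,\tilde T W)=g_Y(V,W)$ for $Y\in\st{p}{\H}$ and $V,W\in T_Y\st{p}{\H}$.

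Next comes the only computation. Using $T^t=T^{-1}$ we get $(\tilde T Y)^t=(T\circ Y)^t=Y^t\circ T^{-1}$, hence $(\tilde T Y)(\tilde T Y)^t=T\,YY^t\,T^{-1}$ and therefore
\[
L_{\tilde T Y}=\mathrm{Id}-\tfrac12(\tilde T Y)(\tilde T Y)^t=T\Big(\mathrm{Id}-\tfrac12 YY^t\Big)T^{-1}=T\circ L_Y\circ T^{-1}.
\]
Plugging this into the definition of the canonical metric and using $(TV)^t=V^t\circ T^{-1}$,
\[
g_{\tilde T Y}(\tilde T V,\tilde T W)=\mathrm{Tr}\big((TV)^t\,L_{\tilde T Y}\,(TW)\big)=\mathrm{Tr}\big(V^t T^{-1}\,T L_Y T^{-1}\,T W\big)=\mathrm{Tr}(V^t L_Y W)=g_Y(V,W),
\]
where all the traces are traces of endomorphisms of the finite dimensional space $\R^p$, so no convergence issue arises. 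This establishes the first assertion.

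The remaining statements are then immediate. A linear isometric action of $G$ on $\H$ is a homomorphism $G\lra\mathrm{O}(\H)$, and since $\widetilde{h_1h_2}=\tilde h_1\circ\tilde h_2$ and $\tilde e=\mathrm{Id}$, the rule $(h,x)\mapsto\tilde h(x)$ is a genuine $G$-action, which by the first part is by isometries of $\riem{\st{p}{\H}}{g}$. Finally, transitivity of $\mathrm{O}(\H)$ on $\st{p}{\H}$ is a purely set-theoretic fact, independent of the choice of metric, already noted for $\euclideo$; hence $\mathrm{O}(\H)$ acts isometrically and transitively on $\riem{\st{p}{\H}}{g}$ as well. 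I do not expect a real obstacle here: the only points requiring a little care are that $T^t=T^{-1}$ persists in infinite dimensions for $T\in\mathrm{O}(\H)$ and the clean identification of $\mathrm{d}\tilde T$; everything else is the one-line conjugation inside $\mathrm{Tr}$.
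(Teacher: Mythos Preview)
Your argument is correct and is essentially the same computation as the paper's: both verify $g_{T\circ Y}(T\circ V,T\circ W)=g_Y(V,W)$ by expanding the definition and cancelling via $T^t=T^{-1}$; you have merely isolated the conjugation identity $L_{\tilde T Y}=T\,L_Y\,T^{-1}$ as an intermediate step and spelled out the differential and transitivity parts that the paper leaves implicit.
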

\begin{proof}
We only check the first part of the Lemma.
\[
\begin{split}
g_{T\circ Y}(T \circ V, T \circ W )&=\mathrm{Tr}((V^t \circ T^t)( \textrm{Id}-\frac{1}{2}(T \circ Y)( Y^t \circ T^t ))T\circ W)\\
                              &=\mathrm{Tr}(V^t ( \textrm{Id}-\frac{1}{2}Y Y^t )W)\\
                              &=g_{Y}(V,W).\\
\end{split}
\]
\end{proof}
The Riemannian metric $g$ on $\st{p}{\H}$ it is called \emph{canonical metric} since if $\H$ has finite dimension $g$ is, up to a constant, the Riemannian metric such that the map
\[
\mathrm{O}({\H}) \mapsto \st{p}{\H}\, \ \ A \mapsto A \circ Y
\]
is a Riemannian submersion, where $\mathrm{O}(\H)$ is endowed by a bi-invariant metric \cite{ce,eas}. Hence by the O'Neil formula, see \cite{ce},
the sectional curvature of $\riem{\st{p}{\H}}{g}$ is nonnegative and nonconstant whenever $p\geq 2$ and $\H$ is finite dimensional.
The $\mathrm{O}(p)$ action is also an isometric action with respect to the \emph{canonical metric} so it induces a Riemannian structure on $\gr{p}{\H}$ such that $\pi:\riem{\st{p}{\H}}{g} \lra \gr{p}{\H}$  is a Riemannian submersion. We point out that the metric induced on $\gr{p}{\H}$ is the same of the metric induced by the euclidian metric. Indeed, $T_Y \mathrm O (p) (Y) =\{Y\circ A:\, A$ is skew-symmetric $\}$
and so
$g_{Y}(V,Y\circ A)=0$ for every $A$ skew symmetric if and only if
\[
\mathrm{Tr}(V^t (\textrm{Id}-\frac{1}{2}YY^t)Y\circ A)=\frac{1}{2}\mathrm{Tr}((Y^t \circ V)^t A)=0,
\]
and so if and only if $V(\R^p)\subset \mathrm{Ker}\, Y^t$ since $Y^t \circ V$ is skew symmetric. Since $g_{Y}(V^t,Y \circ A)=\frac{1}{2}\langle V, Y\circ A \rangle$, we get
\[
\begin{split}
\Bigl(T_Y  \mathrm O(p) (Y) \Bigr)^{\perp_{\euclideo}}&=\Bigl( T_Y \mathrm O (p) (Y)  \Bigr)^{\perp_g}\\ &=\{V:\R^p \lra \H:\ V(\R^p )\subset \mathrm{Ker}\, Y^t \}
\end{split}
\]
and $g_Y$ restricted to $\Bigl(T_Y \mathrm O (p) (Y) \Bigr)^{\perp_{\euclideo}}=\Bigl(T_Y \mathrm O (p)(Y) \Bigr)^{\perp_{g}}$ coincides with $\euclideo$ restricted to $\Bigl(T_Y \mathrm O (p)(Y) \Bigr)^{\perp_{\euclideo}}=\Bigl(T_Y  \mathrm O (p)(Y)\Bigr)^{\perp_{g}}$.

We will denotes again by $g$ such Riemannian metric on $\gr{p}{\H} $. As before $\mathrm{O}(\H)$ acts transitively and isometrically on $\gr{p}{\H}$ and so $\riem{\gr{p}{\H}}{g}$ is a complete Hilbert manifold. The following Lemma is just Lemma \ref{sg1} for the canonical metric and the proof is similar to one given in \cite{hm}. For sake of completeness we give the sketch of the proof.
\begin{lemma}\label{l3}
Let $L:W \lra V$ be an isometric linear immersion. Then $L$ induces isometric immersions,\ denoted by the same symbol,
\[
\tilde L:\st{p}{W} \lra \st{p}{V},\ \ x \mapsto L \circ x,
\]

\[
\tilde L:\gr{p}{W} \lra \gr{p}{V},\ \ \sigma \mapsto L (\sigma).
\]
Moreover, $\tilde L (\st{p}{W}) \hookrightarrow \st{p}{V}$ and $\tilde L (\gr{p}{W}) \hookrightarrow \gr{p}{V}$\ are totally geodesic.
\end{lemma}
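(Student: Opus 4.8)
The plan is to reduce everything to the already-proven facts about the Euclidean metric in Lemma \ref{sg1}, using the explicit relationship between $g$ and $\euclideo$ established above, together with the fact that $\widetilde L$ intertwines the $\mathrm O(p)$-actions on the Stiefel manifolds. First I would check that the two maps are well defined: since $L$ is a linear isometric immersion, $L\circ x$ is again a linear isometric immersion of $\R^p$ whenever $x$ is, so $\widetilde L$ maps $\st{p}{W}$ into $\st{p}{V}$; and since $L$ maps $p$-dimensional subspaces to $p$-dimensional subspaces it descends to $\gr{p}{W}\lra\gr{p}{V}$, and the square with the two bundle projections $\pi$ commutes by construction, $\pi\circ\widetilde L=\widetilde L\circ\pi$.

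Next I would prove that $\widetilde L:\riem{\st{p}{W}}{g}\lra\riem{\st{p}{V}}{g}$ is an isometric immersion. The differential of $\widetilde L$ at $Y$ sends $V\in T_Y\st{p}{W}$ to $L\circ V$, so one must compare $g_{L\circ Y}(L\circ V,L\circ V)$ with $g_Y(V,V)$. Writing $g_Y(V,V)=\mathrm{Tr}(V^t V)-\tfrac12\mathrm{Tr}((Y^tV)^t Y^t V)$ as above, the first term is preserved because $L^t\circ L=\mathrm{Id}_W$ (so $(L\circ V)^t(L\circ V)=V^t L^t L V=V^t V$), and for the second term one uses $(L\circ Y)^t(L\circ V)=Y^t L^t L V=Y^t V$. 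Hence $g_{L\circ Y}(L\circ V,L\circ V)=g_Y(V,V)$, exactly as in the computation in Lemma \ref{asc}; this is the same formal manipulation, only now $L$ is merely an isometric immersion rather than an isometry, which is all that is used. The statement for $\gr{p}{W}\lra\gr{p}{V}$ then follows because on the horizontal distributions $\euclideo$ and $g$ agree (as shown in the paragraph preceding the lemma, $g_Y$ restricted to $(T_Y\mathrm O(p)(Y))^{\perp_g}$ equals $\euclideo$ there), and $\widetilde L$ carries horizontal vectors to horizontal vectors since it respects the $\mathrm O(p)$-action and $L$ sends $\mathrm{Ker}\,Y^t$ into $\mathrm{Ker}(L\circ Y)^t$; so the induced map on Grassmannians is an isometric immersion for the canonical metric precisely because it is one for the Euclidean metric, which is Lemma \ref{sg1}.

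Finally, for the totally geodesic assertion I would argue that a geodesic of $\riem{\st{p}{V}}{g}$ issuing from a point of $\widetilde L(\st{p}{W})$ tangent to it stays inside the submanifold. The cleanest route is to invoke Lemma \ref{sg1}: by the argument in Proposition \ref{hopf-rinow}, any geodesic of $\st{p}{\H}$ is contained in a finite-dimensional Stiefel submanifold, so it suffices to treat the finite-dimensional case, where $g$ is (up to scale) the metric making $\mathrm O(V)\lra\st{p}{V}$ a Riemannian submersion with $\mathrm O(V)$ bi-invariant. Under this description $\widetilde L(\st{p}{W})$ is the image of the totally geodesic subgroup $\mathrm O(W)\subset\mathrm O(V)$ (bi-invariant metrics make closed subgroups totally geodesic), and the image of a totally geodesic submanifold under a Riemannian submersion consisting of whole fibers is totally geodesic; alternatively, one notes that $\widetilde L(\st{p}{W})$ is a connected component of the fixed-point set of the isometry of $\riem{\st{p}{V}}{g}$ induced by the reflection of $V$ across $L(W)$, and fixed-point sets of isometries are totally geodesic. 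The same reflection argument, pushed down through $\pi$, handles $\widetilde L(\gr{p}{W})\hookrightarrow\gr{p}{V}$. I expect the main obstacle to be organizing the totally geodesic part cleanly in the infinite-dimensional setting; the fixed-point-set argument sidesteps this, since the reflection across $L(W)\oplus(\text{orthogonal complement})$ is a genuine isometry of $\riem{\st{p}{V}}{g}$ by Lemma \ref{asc} and its fixed-point set is exactly $\widetilde L(\st{p}{W})$, so no finite-dimensional reduction is strictly needed.
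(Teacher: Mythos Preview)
Your proof is correct and matches the paper's approach: the isometric-immersion computation for the canonical metric is verbatim the paper's, and your fixed-point-set argument via the reflection of $V$ across $L(W)$ is exactly how the paper establishes that $\st{p}{W}$ is totally geodesic. For $\gr{p}{W}$ the paper instead takes the fixed set of the whole group $\mathrm{O}(W^\perp)\hookrightarrow\mathrm{O}(V)$ (acting as $A\mapsto \mathrm{Id}_W\oplus A$), which singles out $\gr{p}{W}$ exactly, whereas the single reflection fixes also the $p$-planes that split across $W\oplus W^\perp$; this is a minor variant of the same strategy, and your connected-component remark already accounts for it.
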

\begin{proof}
Let $V,W\in T_Y \st{p}{W}$. Then
\[
\begin{split}
g_{(L\circ Y)}(L \circ V,L \circ W )&=\mathrm{Tr}((V^t \circ L^t)( \textrm{Id}-\frac{1}{2} (L \circ Y) (Y^t \circ L^t))L \circ W)\\
                              &=\mathrm{Tr}(V^t ( \textrm{Id}-\frac{1}{2}Y Y^t )W)\\
                              &=g_Y(V,W)\\
\end{split}
\]
We identify $W$\ with $L(W)$ and consider the splitting $V=W\oplus {W}^\perp$.\ Let  $T:V \lra V$ be the following isometry: $T_{|_{W}}=Id_{W}$ and $T_{|_{W^\perp}}=-Id_{W^\perp}$. It is easy to check that
\[
\{x\in \st{p}{V}:\, T(x)=x\}=\st{p}{W},
\]
and so $\st{p}{W}$\ is totally geodesic \cite{Kl,Kn}.

The map $\tilde L$ is $\mathrm{O}(p)$-equivariant and so it induces an isometric immersion of $\gr{p}{V}$ into $\gr{p}{W}$
and the fixed points set with respect to $\mathrm{O}(W^\perp )$ embedded in $\mathrm{O}(V)$ as  $A\mapsto \textrm{Id} \oplus A$, coincides with $\gr{p}{W}$ and so $\gr{p}{W}$ is totally geodesic.
\end{proof}
\begin{cor}\label{sc1}
$\riem{\st{p}{\H}}{g}$\ and $\riem{\gr{p}{\H}}{g}$, are complete Hilbert manifolds with nonnegative sectional curvature satisfying the Hopf-Rinow Theorem.
\end{cor}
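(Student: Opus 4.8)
The plan is to establish the three assertions --- completeness, nonnegativity of the sectional curvature, and the Hopf--Rinow property --- separately, in each case reducing either to facts already proved in this section or to the finite dimensional situation. Completeness is the cheapest: for the Stiefel manifold it is immediate from the norm equivalence (\ref{e1}), which makes $g$ bi-Lipschitz equivalent to $\euclideo$, together with the completeness of $\riem{\st{p}{\H}}{\euclideo}$ (this was already noted right after (\ref{e1})); for the Grassmann manifold one uses the observation made just before Lemma \ref{l3} that the metric induced by $g$ on $\gr{p}{\H}$ coincides with the one induced by $\euclideo$, so that $\riem{\gr{p}{\H}}{g}=\riem{\gr{p}{\H}}{\euclideo}$ and both completeness and the Hopf--Rinow property follow at once from Proposition \ref{grassmannian-complete}.

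For the nonnegativity of the sectional curvature I would argue by reduction to finite dimensions. Fix $Y\in\st{p}{\H}$ and a $2$-plane $\sigma\subset T_Y\st{p}{\H}$, spanned by two linear maps $V,W:\R^p\lra\H$, and choose a finite dimensional subspace $W_0\subseteq\H$ of dimension at least $2p$ containing $\mathrm{Im}\,Y$, $\mathrm{Im}\,V$ and $\mathrm{Im}\,W$. Then $Y\in\st{p}{W_0}$ and, since the skew-symmetry condition defining the tangent space does not see the ambient space, $V,W\in T_Y\st{p}{W_0}$, so $\sigma\subset T_Y\st{p}{W_0}$. By Lemma \ref{l3} the inclusion $\riem{\st{p}{W_0}}{g}\hookrightarrow\riem{\st{p}{\H}}{g}$ is totally geodesic; its second fundamental form therefore vanishes and the Gauss equation shows that the sectional curvature of $\sigma$ is the same whether computed in $\st{p}{W_0}$ or in $\st{p}{\H}$. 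Since $W_0$ is finite dimensional, $\riem{\st{p}{W_0}}{g}$ has nonnegative sectional curvature by the O'Neill formula recalled before Lemma \ref{l3} (for $p=1$ the canonical metric is just $\euclideo$ and $\st{1}{W_0}$ is a round sphere), hence $K(\sigma)\geq 0$. The same reasoning applies to $\gr{p}{\H}$ through the totally geodesic embedding $\gr{p}{W_0}\hookrightarrow\gr{p}{\H}$ of Lemma \ref{l3} together with the nonnegativity of the curvature of a finite dimensional Grassmann manifold; alternatively one may apply the O'Neill formula directly to the Riemannian submersion $\pi:\riem{\st{p}{\H}}{g}\lra\riem{\gr{p}{\H}}{g}$.

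For the Hopf--Rinow property of $\riem{\st{p}{\H}}{g}$ I would follow the proof of Proposition \ref{hopf-rinow}. Given $x,y\in\st{p}{\H}$, Ekeland's theorem \cite{ek}, applied to the complete manifold $\riem{\st{p}{\H}}{g}$, yields a sequence $y_n\rightarrow y$ such that $x$ and $y_n$ are joined by a unique minimizing $g$-geodesic $\gamma_n$ and $d_g(x,y_n)\rightarrow d_g(x,y)$. Exactly as for $\euclideo$, the explicit form of the geodesics of the canonical metric \cite{eas} shows that each $\gamma_n$ lies in $\st{p}{W_n}$ for some finite dimensional $W_n\subseteq\H$ containing $x$ and $y_n$; take $W$ to be the closed span of $\mathrm{Im}\,x$, $\mathrm{Im}\,y$ and $\bigcup_n W_n$, a separable Hilbert subspace containing $x$, $y$, all the $y_n$ and all the $\gamma_n$. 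Since $\st{p}{W}$ is totally geodesic in $\st{p}{\H}$ (Lemma \ref{l3}), $\gamma_n$ is still the minimizing geodesic inside $\st{p}{W}$ and $d_g(x,y_n)$ takes the same value in $\st{p}{W}$ and in $\st{p}{\H}$; by continuity and the triangle inequality, letting $n\rightarrow\infty$, the same is true of $d_g(x,y)$. Invoking the separable case for the canonical metric --- handled as in \cite{hm}, the finite dimensional Stiefel manifolds being compact and hence Hopf--Rinow --- one obtains a minimizing $g$-geodesic from $x$ to $y$ in $\st{p}{W}$, and this, being a geodesic of the totally geodesic submanifold $\st{p}{W}$ of length $d_g(x,y)$, is minimizing in $\st{p}{\H}$. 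For $\gr{p}{\H}$ there is nothing further to prove, by the identification of metrics recalled above and Proposition \ref{grassmannian-complete}.

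I expect the completeness and the Gauss-equation reduction for the curvature to be routine; the real work lies in the Hopf--Rinow statement, and precisely in knowing that the geodesic analysis of \cite{eas,hm} --- confinement of geodesics to finite dimensional Stiefel manifolds, and the Hopf--Rinow property over a separable Hilbert space --- carries over from $\euclideo$ to the canonical metric. Since $g$ differs from $\euclideo$ only through the fixed, symmetric, positive, invertible operator $\tilde L_Y$ of norm between $\frac{1}{2}$ and $1$, these computations are local and should go through verbatim, but this is the point that genuinely has to be checked.
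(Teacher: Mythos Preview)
Your proposal is correct and follows essentially the same approach as the paper: reduction of the curvature computation to finite dimensional, totally geodesic Stiefel (respectively Grassmann) submanifolds via Lemma \ref{l3}, and for Hopf--Rinow the combination of \cite{eas} (confinement of geodesics to finite dimensional pieces), the arguments of \cite{hm} for the separable case, and then the Ekeland-based passage from separable to arbitrary $\H$ exactly as in Proposition \ref{hopf-rinow}. Your observation that the $g$-induced and $\euclideo$-induced metrics on $\gr{p}{\H}$ coincide, so that for the Grassmannian completeness and Hopf--Rinow follow directly from Proposition \ref{grassmannian-complete}, is slightly sharper than the paper's ``the proof is similar,'' and your closing caveat about what genuinely has to be checked in transferring \cite{eas,hm} to the canonical metric is well taken.
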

\begin{proof}
We shall prove the result for $(\st{p}{\H},{g})$. The proof for $\riem{\gr{p}{\H}}{g}$ is similar. By Theorem 2.1 in \cite{eas}, we get that any geodesic is contained in a finite dimensional Stiefel manifold. Hence, using the arguments developed in \cite{hm}, it can be proved that $\st{p}{\H}$ is Hopf-Rinow whenever $\H$ is separable. Moreover, Proposition \ref{hopf-rinow} works in this context and so $\riem{\st{p}{\H}}{g}$ is Hopf-Rinow.

Let $Y\in \st{p}{\H}$ and let $V,W\in T_Y \st{p}{\H}$. It is easy to check that  $\mathbb V= \mathrm{span} \bigl( Y(\R^p ), V (\R^p) ), W  (\R^p) ) \bigr)$ is a finite dimensional subspace such that
$Y\in \st{p}{\mathbb V}$ and $V,W\in T_Y \st{p}{\mathbb V}$. Hence by Lemma \ref{l3}, the sectional curvature $K_{Y}(V,W)$\ is nonnegative and the result follows.
\end{proof}
\begin{remark}
Theorem 3 in \cite{hm} can be proved in our context.
\end{remark}
\begin{remark}
As in the finite dimensional case, $\gr{p}{\H}$ is a symmetric Hilbert  manifold.
Let $W\in \gr{p}{\H}$ and let $T:\H \lra \H$ be the isometry of $\H$ such that $T_{|_{W }}=Id_{W}$ and $T_{|_{W^\perp}}=-Id_{W^\perp}$. Let  $\sigma_W$ be the isometry induced on $\gr{p}{\H}$ by $\tilde T$.
It is an involution of $\gr{p}{\H}$ such that $\sigma_W (W)=W$.
We shall prove that $(\mathrm d \sigma )_W =-\mathrm{Id}_{|_{ {T_{W} \gr{p}{\H}}}}$ and so $\gr{p}{\H}$ is a symmetric space.

Let  $Y\in \st{p}{\H}$ such that $Y(\R^p)=W$. As we saw before,
\[T_Y
\st{p}{\H}=\Bigl( T_Y \mathrm{O}(p)(Y) \Bigr)^\perp  \oplus\, T_Y  \mathrm{O}(p)(Y),
\]
where
$
\Bigl( T_Y \mathrm O (p)(Y) \Bigr)^\perp=\{V:\R^p \lra \H:\ V(\R^p)\subset W^\perp \}
$
and so the following diagram:
\[
\xymatrix{
\Bigl(T_Y  \mathrm O (p) (Y)   \Bigr)^\perp \,     \ar[d]_{(\mathrm d \pi )_Y}  \ar[r]^{-\mathrm{Id}} & \Bigl(T_Y \mathrm O (p) (Y)  \Bigr)^\perp  \ar[d]^{(\mathrm d \pi )_Y} \\
T_{W} \gr{p}{\H}   \ar[r]^{(\mathrm d \sigma_W )_W } &  T_{W} \gr{p}{\H}. \\
}
\]
is commutative. Therefore $(\mathrm d \sigma_W )_W = -\mathrm{Id}_{|_{ {T_{W} \gr{p}{\H}}}}$.
\end{remark}
\subsection{Complex Stiefel and Grassmannian manifolds}\label{complex-grassmannian}
In this subsection we briefly discuss complex Stiefel and Grassmannian manifolds.
Let $\H$ be a complex Hilbert space and let $\linc{p}{\H}$ the set of complex linear maps which is a complex Hilbert space with respect to the following Hermitian Hilbert product: $h(X,Y)=\mathrm{Tr}(Y^* X)$.
Moreover, $h(\cdot, \cdot)=\euclideo - i\omega (\cdot,\cdot)$, $\euclideo$ defines a real Hilbert structure on $\linc{p}{\H}$ and $\omega$ is a symplectic form, i.e. $\omega:\linc{p}{\H} \times \linc{p}{\H} \lra \R$ is skew-symmetric and the natural map associated to $\omega$, i.e., $\tilde \omega: \linc{p}{\H} \lra \linc{p}{\H}^*$ defined by setting $\tilde \omega (L)=\omega(L,\cdot)$, is an isomorphism. If we denote by $J$ the multiplication by $i$ in $\H$, $J$ defines an almost complex structure on $\linc{p}{\H}$, that we also denote by $J$, such that $\omega(\cdot,\cdot)=\langle J \cdot,\cdot \rangle$. Hence $\omega$ is a K\"ahler form on $\linc{p}{\H}$ \cite{Kn2}.
The complex Stiefel manifold can be viewed as
$\stc{p}{\H}=\{x:\C^p \lra \H:\, x^* \circ x=\idc{p} \}$ and its tangent space is given by $T_Y \stc{p}{\H}=\{V\in \linc{p}{\H}:\ Y^* \circ V$ is skew-hermitian$\}$.
If we restrict $\euclideo$ on $\stc{p}{\H}$, then $\riem{\stc{p}{\H}}{\euclideo}$ is a complete Hilbert manifold.

The Lie group $\mathrm U (p)$ acts isometrically on $\linc{p}{\H}$ by setting $A \cdot \phi= \phi \circ A^*$ and it preserves $\stc{p}{\H}$ on which it acts freely. Moreover, one can check that
\[
\begin{array}{l}
\Bigl( T_Y \st{p}{\H} \Bigr)^{\perp_{h}}=J\Bigl( T_Y \mathrm U(p) (Y) \Bigr) \\
T_Y \stc{p}{\H} \cap T_Y \stc{p}{\H}^{\perp_{\omega}}=T_Y U(p) (Y) , \\
T_Y \stc{p}{\H} =  T_Y \mathrm{U} (p) (Y) \oplus \Bigl( T_Y \mathrm{U}(p) (Y) \Bigr)^{\perp_{h}},
\end{array}
\]
and $(\mathrm d \pi)_Y :  \Bigl( T_Y \mathrm{U}(p) (Y) \Bigr)^{\perp_{h}} \lra T_Y \grc{p}{\C}$ is an isomorphism for every $Y\in \stc{p}{\H}$, where $\pi:\stc{p}{\H} \lra \grc{p}{\H}$ is the natural projection. Hence $\omega$ and $J$ on $\linc{p}{\H}$ induce a K\"ahler structure on $\grc{p}{\H}$. Note also that $\mathrm{U}(\H)$ commutes with the $\mathrm{U}(p)$ actions and so it acts by holomorphic isometries and transitively on $\grc{p}{\H}$. Therefore $\grc{p}{\H}$ is a complete K\"ahler manifold and if a group $G$ acts isometrically on $\H$, then it acts isometrically on $\linc{p}{\H}$ by setting $(g,Y)\mapsto g\circ Y$, and so it acts by holomorphic isometries on $\grc{p}{\H}$. The following Lemma can be proved as Lemma \ref{l3} and Corollary \ref{sc1} keeping in mind that $\grc{p}{\H}$ has positive holomorphic sectional curvature whenever $\H$ is finite dimensional. Indeed,
$\frac{2}{p}\leq K_Y (X,J(X))\leq 2$ for every $Y\in \grc{p}{\H}$ and for every $X\in T_Y \grc{p}{\H}$ (see \cite{wong}).
\begin{lemma}\label{hc}
Let $L:W \lra V$ be an isometric linear immersion between complex Hilbert space.  Then $L$ induces an isometric immersion
\[
\tilde L:\grc{p}{W} \lra \grc{p}{V},\ \ \sigma \mapsto L (\sigma).
\]
$\tilde L (\grc{p}{W}) \hookrightarrow \grc{p}{V}$ is totally geodesic. Moreover $\grc{p}{\H}$ is a complete K\"ahler Hilbert manifold with  positive holomorphic sectional curvature.
\end{lemma}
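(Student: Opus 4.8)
The plan is to follow, almost verbatim, the scheme of Lemma \ref{l3} and Corollary \ref{sc1}, working first at the level of the complex Stiefel manifolds. First I would check that $L$ induces an isometric immersion $\tilde L\colon\stc{p}{W}\lra\stc{p}{V}$, $x\mapsto L\circ x$, with respect to the real metric $\euclideo=\mathrm{Re}\,h$: this is the complex analogue of the computation in Lemma \ref{asc}, since for $V,W'\in T_Y\stc{p}{W}$ one has $\mathrm{Tr}\bigl((L\circ W')^{*}(L\circ V)\bigr)=\mathrm{Tr}\bigl((W')^{*}L^{*}LV\bigr)=\mathrm{Tr}\bigl((W')^{*}V\bigr)$ because $L^{*}\circ L=\mathrm{Id}$, so the Hermitian products, hence their real parts, coincide. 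Next, $\tilde L$ is $\mathrm U(p)$-equivariant, $\tilde L(x\circ A^{*})=(\tilde Lx)\circ A^{*}$, and it carries the vertical distribution $T_{Y}\mathrm U(p)(Y)$ onto $T_{L\circ Y}\mathrm U(p)(L\circ Y)$; being a linear isometry on each tangent space, it therefore sends horizontal vectors isometrically to horizontal vectors. As the metrics on $\grc{p}{W}$ and $\grc{p}{V}$ are defined so that the projections are Riemannian submersions, the induced map $\tilde L\colon\grc{p}{W}\lra\grc{p}{V}$, $\sigma\mapsto L(\sigma)$, is an isometric immersion; and since $L$ is complex linear it commutes with multiplication by $i$, i.e. with $J$, so, $J$ on each Grassmannian being the structure induced on the horizontal distribution, $\tilde L$ is holomorphic.

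For the totally geodesic assertion I would argue exactly as at the end of the proof of Lemma \ref{l3}. Identifying $W$ with $L(W)$ and using the splitting $V=W\oplus W^{\perp}$, let $T\colon V\lra V$ be the unitary map with $T_{|W}=\mathrm{Id}_{W}$ and $T_{|W^{\perp}}=-\mathrm{Id}_{W^{\perp}}$. Then $\tilde T$ is an isometry of $\stc{p}{V}$ whose fixed point set is precisely $\stc{p}{W}$, so $\stc{p}{W}$ is totally geodesic; equivalently, at the quotient level $\grc{p}{W}$ is a union of connected components of the fixed point set of the isometry of $\grc{p}{V}$ induced by $\mathrm U(W^{\perp})\hookrightarrow\mathrm U(V)$, $A\mapsto\mathrm{Id}\oplus A$, hence totally geodesic.

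It remains to treat completeness and positivity of the holomorphic sectional curvature. Completeness of $\riem{\grc{p}{\H}}{\euclideo}$ follows as in Proposition \ref{grassmannian-complete}: $\mathrm U(\H)$ acts transitively and isometrically, so the manifold is homogeneous, and one runs the Cauchy-sequence/metric-ball argument of Ekeland together with the finite-dimensional results of \cite{eas,hm}. For the curvature, fix $Y\in\grc{p}{\H}$ and $0\neq X\in T_{Y}\grc{p}{\H}$ and lift $X$ to the horizontal vector $\hat X$ at a frame $\hat Y$ with $\hat Y(\C^{p})=Y$. Since horizontal vectors have image orthogonal to $Y$, the complex subspace $\mathbb V=Y\oplus\mathrm{Im}\,\hat X$ is finite dimensional with $p<\dim_{\C}\mathbb V\leq 2p$, contains $Y$ and carries $\hat X$, and, the horizontal distribution being $J$-invariant, it also carries $J\hat X=i\hat X$. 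By the previous paragraphs the inclusion $L\colon\mathbb V\hookrightarrow\H$ induces a holomorphic, totally geodesic embedding $\tilde L\colon\grc{p}{\mathbb V}\hookrightarrow\grc{p}{\H}$ through $Y$ whose image is tangent to the $J$-invariant plane $\mathrm{span}(X,JX)$; hence $K_{Y}(X,JX)$ is the same whether computed in $\grc{p}{\H}$ or in the non-degenerate finite-dimensional $\grc{p}{\mathbb V}$, where $\frac{2}{p}\leq K_{Y}(X,JX)\leq 2$ by \cite{wong}. Therefore the holomorphic sectional curvature of $\grc{p}{\H}$ is positive.

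The only delicate point is this last step: one must ensure that the finite-dimensional model $\grc{p}{\mathbb V}$ is non-degenerate (so that $\dim_{\C}\mathbb V>p$, which is automatic once $X\neq0$) and that the reduction preserves the \emph{holomorphic} sectional curvature — which is precisely why it is essential that the totally geodesic submanifolds produced in Steps 1–3 are moreover complex submanifolds.
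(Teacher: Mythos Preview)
Your proposal is correct and follows essentially the same route the paper indicates: it reproduces the argument of Lemma~\ref{l3} in the complex setting (isometric immersion at the Stiefel level, $\mathrm U(p)$-equivariance, and the fixed-point-of-an-isometry argument for total geodesicity) and then mimics Corollary~\ref{sc1} by reducing the curvature computation to a finite-dimensional $\grc{p}{\mathbb V}$ where Wong's bound $\frac{2}{p}\leq K_Y(X,JX)\leq 2$ applies. Your explicit remark that the totally geodesic submanifolds are moreover complex, so that the \emph{holomorphic} sectional curvature transfers, is a point the paper leaves implicit but is indeed what makes the reduction work.
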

We conclude this section pointing out that $\grc{p}{\H}$ is simply connected. Indeed, let $\gamma:[0,1]\lra \grc{p}{\H}$ be a loop. Then there exists an $\epsilon >0$ such that for every $t\in[0,1]$, $\exp_{\gamma(t)}$ is a diffemorphism from the ball of radius $\epsilon$ in $T_{\gamma(t)} \grc{p}{\H}$ onto the geodesic ball of $\gamma(t)$ of radius $\epsilon$. Then there exists a broken closed geodesic $c$ starting from $\gamma(0)$, homotopically  equivalent to $\gamma$, with the following property: there exists a partition $0 < t_1 < \ldots < t_n < 1$ such that $c_{|_{[t_i,t_{i+1}]}}$ is the unique minimal geodesic joining $c(t_i)$ and $c(t_{i+1})$ with length lesser than $\epsilon$ for $i=0,\ldots,n-1$. Hence if we take $W=\mathrm{span}(c(0),\ldots,c(n), \fis c (0), \ldots , \fis c (n))$, by the above Lemma it is easy to prove $c(t)\in \grc{p}{W}$. This means that $c$, and so $\gamma$, is homotopically equivalent to a constant loop since $\grc{p}{W}$ is simply connected whenever $W$ has finite dimension \cite{huckleberry-introduction-DMV}.
\section{Proof of Theorem \ref{main1}}\label{proof1}
We know that if $H$ acts linearly and by isometry on $\H$, then it also acts isometrically  on $\st{p}{\H}$ by setting
\[
\mu:H \times \st{p}{\H} \lra \st{p}{\H}\ \ \ (g,x) \mapsto \tilde g (x).
\]
The following fact holds.
\begin{lemma}\label{sp1}
If $H$ acts isometrically and properly discontinuously on then unit sphere of $\H$, then it acts isometrically and properly discontinuously on $\st{p}{\H}$.
\end{lemma}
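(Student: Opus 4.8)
The plan is to reduce a properly discontinuous action on $\st{p}{\H}$ to one on the unit sphere $\st{1}{\H}$, using the fact (Lemma \ref{l3}, or rather its euclidean analogue Lemma \ref{sg1}) that a linear isometric inclusion $\R \hookrightarrow \R^p$ induces a totally geodesic, isometric embedding of $\st{1}{\H}$ into $\st{p}{\H}$. Concretely, fix the inclusion $\iota\colon\R\lra\R^p$ onto the first coordinate; this identifies $\st{1}{\H}$ (the unit sphere of $\H$) with the totally geodesic submanifold $S=\{x\in\st{p}{\H}:\ x(e_2)=\cdots=x(e_p)=v_0\ \text{fixed}\}$ — more precisely with the set of frames whose last $p-1$ columns equal a fixed orthonormal $(p-1)$-tuple and whose first column is an arbitrary unit vector orthogonal to that tuple. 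The subtlety is that $S$ is not $H$-invariant, so one cannot just restrict the action. Instead I would argue directly with the definition of discontinuity and then invoke Proposition \ref{dp}.

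The key steps, in order. First, assume $H$ acts discontinuously and isometrically on $\st{1}{\H}$; by Proposition \ref{dp} it suffices to show $H$ acts discontinuously on $\st{p}{\H}$ (the action is already isometric by Lemma \ref{asc}/the remarks before it). Second, observe that for $x\in\st{p}{\H}$ and $g\in H$, the action is columnwise: $\tilde g(x)(e_i)=g(x(e_i))$ for each basis vector $e_i$ of $\R^p$. Hence if $a_n(x)\to y$ in $\st{p}{\H}$ for mutually distinct $a_n\in H$, then in particular $a_n(x(e_1))\to y(e_1)$ in $\H$, and $x(e_1)$ is a unit vector, i.e. a point of $\st{1}{\H}$. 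Third, the $a_n$ being mutually distinct in $H$ and acting freely (properly discontinuous $\Rightarrow$ free, established in the Preliminaries) on $\st{1}{\H}$: I need that the elements $a_n$, viewed as acting on $\H$, remain "mutually distinct as far as the point $x(e_1)$ sees them" — but this is exactly what could fail, so the honest route is: the sequence $a_n(x(e_1))$ converges, contradicting discontinuity of the $H$-action on $\st{1}{\H}$, \emph{provided} infinitely many $a_n(x(e_1))$ are distinct. If only finitely many values $a_n(x(e_1))$ occur, pass to a subsequence on which $a_n(x(e_1))=w$ is constant; then $a_m a_n^{-1}$ fixes $w$ for all $m,n$ in that subsequence. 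Fourth — and this is the point where the frame structure (rather than a single vector) is used — repeat the argument with $e_2$: on a further subsequence $a_n(x(e_2))$ is constant too, etc., through $e_p$. After finitely many refinements we obtain a subsequence on which $a_m a_n^{-1}$ fixes every $x(e_i)$, hence fixes the frame $x\in\st{p}{\H}$; since the $H$-action on $\st{p}{\H}$ is free (it contains $\st{1}{\H}$'s free action, or directly: $\tilde g(x)=x$ with $x(e_1)$ a unit vector forces $g$ to fix a unit vector, and the sphere action is free), $a_m a_n^{-1}=e$, contradicting distinctness. Therefore no such convergent sequence exists: $H$ acts discontinuously on $\st{p}{\H}$, and Proposition \ref{dp} finishes the proof.

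The main obstacle is precisely the ``collapsing'' possibility in the third/fourth steps: discontinuity on $\st{1}{\H}$ is a statement about how $H$ moves a \emph{single} unit vector, while $\st{p}{\H}$ records the joint behaviour on $p$ orthonormal vectors, and a priori a mutually-distinct sequence $a_n$ could act on $x(e_1)$ through only finitely many distinct group elements. The resolution is the successive-subsequence argument combined with freeness: an element of $H$ fixing a full orthonormal $p$-frame must be the identity because it fixes a unit vector and the sphere action is free (equivalently, one uses that the isometric action on $\st{p}{\H}$ induced from a free isometric action on $\st{1}{\H}$ is itself free). Once freeness on $\st{p}{\H}$ is in hand, the subsequence extraction is routine and the contradiction with discontinuity on $\st{1}{\H}$ is immediate, so the only real content is organizing these reductions cleanly.
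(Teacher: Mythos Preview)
Your argument is correct and at its core follows the paper's route: check freeness on $\st{p}{\H}$, then show discontinuity by evaluating a would-be convergent sequence $a_n(x)$ at a single unit vector $v=e_1$ and invoking discontinuity of the $H$-action on the sphere, finishing with Proposition~\ref{dp}. The paper does exactly this in three lines.

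Where you diverge is in the ``collapsing'' detour of your third and fourth steps, and that detour is unnecessary. You already observe that the $H$-action on the unit sphere is free; but then for distinct $a_m\neq a_n$ one has $a_m(x(e_1))\neq a_n(x(e_1))$, since otherwise $a_n^{-1}a_m$ would fix the unit vector $x(e_1)$. Hence the points $a_n(x(e_1))$ are automatically pairwise distinct, and their convergence already contradicts discontinuity on $\st{1}{\H}$. There is no need to pass to subsequences, no need to bring in $e_2,\ldots,e_p$, and the opening discussion of totally geodesic embeddings via Lemma~\ref{sg1} plays no role either: all that is used is that $x\mapsto x(e_1)$ is continuous and $H$-equivariant into the unit sphere.
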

\begin{proof}
Since $H$ acts freely on the unit sphere of $\H$, then it also acts freely on $\st{p}{\H}$. We prove that $H$ acts properly discontinuously on $\st{p}{\H}$ applying Proposition \ref{dp}.

Let $a_n$ be a sequence in $H$  and let $x\in \st{p}{\H}$. If $a_n (x)$ converged to $y\in \st{p}{\H}$, then $(a_n x )(v)$ would converge to $y(v)$ for every $v\in \R^p$. However, if $\parallel v \parallel =1$, then both $(a_n x)(v)$ and $y(v)$ belong to the unit sphere of $\H$ and so this is an absurd. Therefore $H$ acts properly discontinuously on $\st{p}{\H}$.
\end{proof}
Now we shall prove a similar result for the Grassmannian manifolds.
\begin{lemma}\label{gp1}
Let $G$ be a torsionfree group acting isometrically and properly discontinuously on then unit sphere of $\H$.
Then $G$ acts isometrically and properly discontinuously on $\riem{\gr{p}{\H}}{\euclideo}$.
\end{lemma}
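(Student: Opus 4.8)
The plan is to reduce the Grassmannian statement to the Stiefel statement already proved in Lemma~\ref{sp1}, exploiting the Riemannian submersion $\pi:\st{p}{\H}\lra\gr{p}{\H}$ and the hypothesis that $G$ is torsionfree. First I would observe that by Lemma~\ref{sp1}, $G$ acts isometrically and properly discontinuously on $\riem{\st{p}{\H}}{\euclideo}$, and that this action commutes with the $\mathrm{O}(p)$ action (since the $\mathrm{O}(\H)$ action $x\mapsto \tilde g(x)=g\circ x$ commutes with $x\mapsto x\circ A^t$). Hence the $G$ action descends to a well-defined isometric action on $\gr{p}{\H}=\st{p}{\H}/\mathrm{O}(p)$, given on $\sigma=Y(\R^p)$ by $g(\sigma)=(g\circ Y)(\R^p)=g(\sigma)$. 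By Proposition~\ref{dp} it suffices to show this induced action is free and discontinuous.

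For freeness: suppose $g(\sigma)=\sigma$ for some $g\in G$ and $\sigma\in\gr{p}{\H}$; pick $Y\in\st{p}{\H}$ with $Y(\R^p)=\sigma$. Then $\tilde g(Y)$ and $Y$ lie in the same $\mathrm{O}(p)$ orbit, so $g\circ Y=Y\circ A^t$ for some $A\in\mathrm{O}(p)$. Iterating, $g^n\circ Y=Y\circ (A^n)^t$ for all $n$. Now here is where torsionfreeness enters: the subgroup $\langle g\rangle$ acting on the unit sphere is either trivial or infinite; if $g\neq e$ then, picking a unit vector $v\in\R^p$, the points $(g^n\circ Y)(v)=Y(A^n\cdot(\text{something}))$... more cleanly, consider that $g$ restricted to the finite-dimensional subspace $Y(\R^p)$ acts as $Y\circ A^t\circ Y^{-1}|_{Y(\R^p)}$, an orthogonal transformation of a $p$-dimensional space; since $\mathrm{O}(p)$ is compact, $\overline{\langle A\rangle}$ is a compact abelian group, so some subsequence $A^{n_k}$ converges, forcing $g^{n_k}(Yv)=Y(A^{n_k}\cdot\ldots)$ to converge on the unit sphere — contradicting that $G$ acts discontinuously on the sphere unless the $g^{n_k}$ are eventually constant, i.e. $g$ has finite order, whence $g=e$ as $G$ is torsionfree. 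So the induced action is free.

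For discontinuity: let $\sigma\in\gr{p}{\H}$, fix $Y$ over $\sigma$, and let $a_n\in G$ be mutually distinct with $a_n(\sigma)\to\tau$ in $\gr{p}{\H}$. Choose $Z\in\st{p}{\H}$ over $\tau$. Since $\pi$ is a Riemannian submersion and $a_n(\sigma)\to\tau$, one can choose representatives: there exist $B_n\in\mathrm{O}(p)$ with $(a_n\circ Y)\circ B_n^t\to Z$ in $\st{p}{\H}$ (lifting a convergent sequence through the submersion, using that $\mathrm{O}(p)$ is compact so a convergent subsequence of $B_n$ may be extracted and absorbed). Passing to that subsequence, $a_n\circ(Y\circ B_n^t)$ converges; but $Y\circ B_n^t$ need not converge, so instead extract a further subsequence with $B_n\to B$, giving $a_n\circ(Y\circ B^t)$ convergent in $\st{p}{\H}$ along this subsequence, with the $a_n$ still mutually distinct — contradicting that $G$ acts discontinuously on $\st{p}{\H}$ (equivalently, on the unit sphere, applied to the unit vector $B\cdot e_1/\|\cdot\|$). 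Hence no such sequence exists and the action on $\gr{p}{\H}$ is discontinuous; by Proposition~\ref{dp} it is properly discontinuous, and $\riem{\gr{p}{\H}}{\euclideo}$ being complete (Proposition~\ref{grassmannian-complete}), so is the quotient.

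The main obstacle is handling the $\mathrm{O}(p)$ ambiguity cleanly in both the freeness and the discontinuity arguments: a priori $a_n(\sigma)\to\tau$ only gives lifts that converge \emph{up to} an $\mathrm{O}(p)$ factor, and one must use the compactness of $\mathrm{O}(p)$ to extract a subsequence along which the twisting factors $B_n$ converge, thereby converting convergence in $\gr{p}{\H}$ into genuine convergence of a sequence $a_n(\tilde Y)$ in $\st{p}{\H}$ for a fixed frame $\tilde Y$. Once that reduction is made, Lemma~\ref{sp1} (or directly the hypothesis on the sphere) closes both cases, and torsionfreeness is exactly what rules out the finite-order elements that could otherwise fix a $p$-plane without fixing a frame.
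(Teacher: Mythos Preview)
Your proposal is correct and follows essentially the same route as the paper: freeness is obtained by observing that a nontrivial $g$ fixing $\sigma$ would restrict to an orthogonal map of the finite-dimensional space $\sigma$ and hence (by compactness of $\mathrm{O}(p)$, equivalently of the unit sphere of $\sigma$) yield a convergent sequence $g^{n_k}(Yv)$ on the sphere, contradicting discontinuity and torsionfreeness; discontinuity is obtained by lifting $a_n(\sigma)\to\sigma_o$ through the fiber bundle $\pi$ to get $a_n\circ x\circ k_n\to y$ in $\st{p}{\H}$, extracting $k_n\to k$ by compactness of $\mathrm{O}(p)$, and invoking Lemma~\ref{sp1}. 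The paper's freeness argument is phrased slightly more succinctly (``$\Z$ would act properly discontinuously on the unit sphere of $\sigma$, which is impossible since $\sigma$ is finite-dimensional''), but this is the same observation you unpack.
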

\begin{proof}
Firstly we prove that $G$ acts freely on $\gr{p}{\H}$. Indeed, if $g (\sigma) =\sigma$, then $\Z$ would act properly discontinuously on the unit sphere of $\sigma$, by setting $(n,v)\mapsto g^n (v)$, which is not possible since $\sigma$ has finite dimensional. Let $\sigma\in \gr{p}{\H}$ e let $a_n$ be a sequence in $G$ and suppose $a_n ( \sigma)$ converges to $\sigma_o$.
Let $x\in \st{p}{\H}$ such that $x(\R^p)=\sigma$ and let $y\in \st{p}{\H}$ such that $y(\R^p)=\sigma_o$. Since $\pi:\st{p}{\H} \lra \gr{p}{\H}$ is a fiber bundle, there exists a sequence $k_n \in \mathrm{O}(p)$
such that $a_n \circ x \circ k_n$ converges to  $y\in \st{p}{\H}$. We may assume that $k_n \mapsto k$. Therefore $a_n (x\circ k)$ converges to $y$ that is a contradiction by Lemma \ref{sp1}.
\end{proof}
Let $G$ be a set. It is well known, see \cite{Ru}, that
$$l_2(G) = \{x: G \vai \R : \sum_{g\in G}[x(g)]^2 < \infty \},$$
is a Hilbert space  and a Hilbert
basis is given  by the functions $e_h(g) = \delta_{hg},\, h, g \in G$.
Moreover, for every bijective map  $\phi:G \lra G$, if
$x \in l_2(G)$ then $x\ \circ \phi \in l_2 (G)$ and
$\pr x \pr = \pr x \circ  \phi \pr$.

If $G$ is a group, then $G$ acts isometrically on $l_2 (G)$ by setting:
$$\mu : G \times l_2(G) \vai l_2(G)\ \ \mu(g,x) =gx= x\circ R_g ,$$
where $R_g$ is the right translation.

The following result is proved in \cite{bi}.
\begin{thm}\label{miotesi}
Let $G$ be a torsionfree group.  If $H\cong G\oplus \Z_{p_1^{\alpha_1}}\oplus \cdots \oplus \Z_{p_k^{\alpha_k}}$,  then $H$ acts properly discontinuously on the unit sphere of $l_2 (G)$ if and only if $p_i\neq p_j$ whenever $i\neq j$.
\end{thm}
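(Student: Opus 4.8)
The plan is to first reduce the statement to a question about free, discontinuous actions. Since a linear isometric action is in particular isometric, Proposition \ref{dp} lets us replace ``properly discontinuous'' by ``discontinuous'': it suffices to decide when $H$ admits a linear isometric action on $l_2(G)$ that is free on the unit sphere and such that no sequence of pairwise distinct elements of $H$ moves a point of the sphere to a convergent sequence. Throughout I would assume $G$ infinite, so that $|H|=|G|$ and $l_2(H)\cong l_2(G)$ as Hilbert spaces.

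For the ``only if'' direction I would argue by contradiction. If $p_i=p_j=:p$ for some $i\neq j$, then $H$ contains a subgroup $K\cong\Z_p\oplus\Z_p$, namely the sum of the order-$p$ subgroups of $\Z_{p_i^{\alpha_i}}$ and $\Z_{p_j^{\alpha_j}}$, and a properly discontinuous --- hence free --- action of $H$ on the unit sphere would restrict to a free action of $K$. But any linear isometric action of the finite group $K$ decomposes the Hilbert space as an orthogonal direct sum of finite-dimensional $K$-invariant subspaces; complexifying a nonzero one and diagonalising, and using that $\Z_p\oplus\Z_p$ is not cyclic and hence has no character with trivial kernel, one finds a nontrivial $c\in K$ acting trivially on a nonzero subspace, hence fixing a nonzero real vector. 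So $K$ cannot act freely on a sphere, a contradiction.

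For the ``if'' direction, assume the $p_i$ are pairwise distinct, so $T:=\Z_{p_1^{\alpha_1}}\oplus\cdots\oplus\Z_{p_k^{\alpha_k}}\cong\Z_N$ is cyclic, $N=p_1^{\alpha_1}\cdots p_k^{\alpha_k}$. I would realise the action on $l_2(G;\C)$, which as a real Hilbert space is isometric to $l_2(G)$ since $G$ is infinite: let $G\leq H$ act by the right regular representation $\xi\mapsto\xi\circ R_g$ and let a generator of $\Z_N\leq H$ act as multiplication by a primitive $N$-th root of unity $\zeta$. These commute, so $H$ acts linearly and isometrically. Freeness on the sphere would follow since $(g,\zeta^j)\xi=\xi$ forces $\xi\circ R_g=\zeta^{-j}\xi$: for $g=e$ this gives $\zeta^{-j}=1$, and for $g\neq e$ it exhibits $\xi$ as an eigenvector of $R_g$, impossible because $g$ has infinite order ($G$ being torsionfree), so $R_g$ is a direct sum of bilateral shifts and has empty point spectrum. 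For discontinuity, given a Cauchy sequence $a_n\xi$ with the $a_n\in H$ pairwise distinct, isometry gives $\parallel a_n\xi-a_m\xi\parallel=\parallel a_m^{-1}a_n\,\xi-\xi\parallel$, so it is enough to show $\inf_{b\neq e}\parallel b\xi-\xi\parallel>0$; writing $b=(h,\zeta^j)$ and using $\parallel b\xi-\xi\parallel^2=2-2\,\mathrm{Re}\langle b\xi,\xi\rangle$ this reduces to $\sup_{h\neq e}|\langle\xi\circ R_h,\xi\rangle|<1$, the elements with $h=e$ contributing a quantity that is manifestly $<1$. For each fixed $h\neq e$ this is the strict Cauchy--Schwarz inequality, again because $R_h$ has no eigenvectors; uniformity follows by extracting subsequences: a value of $h$ repeated infinitely often would force equality, while pairwise distinct $h_n$ force $\langle\xi\circ R_{h_n},\xi\rangle\to0$ along a subsequence, since the translates $Fh_n$ of a fixed finite set $F\subset G$ carrying most of the $l_2$-mass of $\xi$ can be taken pairwise disjoint and hence carry vanishing mass. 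In both cases we contradict $\parallel a_n\xi-a_m\xi\parallel\to0$, so the action is discontinuous and Proposition \ref{dp} gives proper discontinuity.

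I expect the discontinuity estimate to be the real obstacle. Controlling the torsion coordinate $\zeta^{j_n}$ is trivial since it lives in a finite set, but the uniform bound $\inf_{h\neq e}\parallel\xi\circ R_h-\xi\parallel>0$ for a fixed $\xi\in l_2(G)$ is delicate: it hinges on the combinatorial ``escape to infinity'' of translates of finite subsets of $G$ and on the torsionfreeness of $G$, which is exactly what guarantees that $\xi\circ R_h=\xi$ forces $h=e$, equivalently that translation operators of infinite order have no eigenvectors.
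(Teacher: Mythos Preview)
The paper does not supply its own proof of Theorem \ref{miotesi}: it is quoted verbatim from \cite{bi} (``The following result is proved in \cite{bi}''), so there is nothing in the present paper to compare your argument against.

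Your approach is nonetheless sound and is in the spirit of \cite{bi}. A few remarks. For the ``only if'' direction, you are implicitly using that isometries of the unit sphere of a Hilbert space extend to orthogonal transformations of the ambient space; this is true (the sphere has constant curvature $1$ and is simply connected in infinite dimension) and is what justifies restricting attention to linear actions. Your decomposition argument for $\Z_p\oplus\Z_p$ is correct: any unitary representation of a finite abelian group on a complex Hilbert space splits into character spaces, and the nontrivial kernel of any character yields a real fixed vector after taking real or imaginary part. For the ``if'' direction, your construction and the freeness argument via the absence of eigenvectors for $R_g$, $g\neq e$, are fine. The uniformity step is the only place that needs a touch of care: your phrase ``the translates $Fh_n$ \dots\ can be taken pairwise disjoint and hence carry vanishing mass'' is slightly off. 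What you actually need is that for all but finitely many $h$ one has $F\cap Fh=\emptyset$ (equivalently $h\notin F^{-1}F$), which forces the main term of $\langle\xi\circ R_h,\xi\rangle$ to vanish; alternatively, one notes that each point of $G$ lies in at most $|F|$ of the sets $Fh_n$, so $\sum_n\|\xi|_{Fh_n}\|^2\le|F|\,\|\xi\|^2$ and hence $\|\xi|_{Fh_n}\|\to0$. Either way the conclusion $\langle\xi\circ R_{h_n},\xi\rangle\to0$ follows, and the rest of your argument goes through.
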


Hence Theorem \ref{main1} follows from  Theorem \ref{miotesi}, Lemmata \ref{sp1} and \ref{gp1} and Proposition \ref{dp}.
\section{Proof of Theorem \ref{main2}}\label{section 2}
By Proposition \ref{asc}, if a group acts isometrically on $\H$, it also acts isometrically on $\riem{\st{p}{\H}}{g}$.
By Equation (\ref{e1}), we get
\[
\frac{1}{2} d_{\euclideo} (\cdot, \cdot) \leq d_{g} (\cdot, \cdot ) \leq d_{\euclideo} (\cdot, \cdot).
\]
Therefore, by Lemma \ref{sp1}, if $G$ acts properly discontinuously on $\H$, then it acts properly discontinuously on $\riem{\st{p}{\H}}{g}$. Since $\riem{\st{p}{\H}}{g}$ is  contractible (see \cite{ee}) and has
 nonnegative sectional curvature, then applying Propositions \ref{dp}, we have  a complete Hilbert manifolds with nonnegative
and nonconstant sectional curvature whose fundamental group is isomorphic to
$G\oplus \Z_{p_1^{\alpha_1}}\oplus \cdots \oplus \Z_{p_k^{\alpha_k}}$ where $G$ is a torsionfree group and $p_i\neq p_j$ whenever $i\neq j$. This proves the first part of Theorem \ref{main2}.

Let $G$ be a set and consider
$$l^\C_2(G) = \{x: G \vai \C : \sum_{g\in G} | x(g) |^2 < \infty \}.$$
Then $l^\C_2(G)$\ is a complex Hilbert space  and a Hilbert
basis is given  by the functions $e_h(g) = \delta_{hg},\, h, g \in G$.
Moreover,  if $G$ is a group, then $G$ acts isometrically on $l^\C_2 (G)$ by setting:
$$\mu : G \times l^\C_2(G) \vai l^\C_2(G)\ \ \mu(g,x) =gx= x\circ R_g .$$

If $G$ is a torsionfree group, it can be proved as in \cite{bi}, that $G$ acts properly discontinuously on the unit sphere of $l^\C_2 (G)$ and so by the arguments developed in Section \ref{complex-grassmannian} $G$ acts by holomorphic isometry on $\grc{p}{\H}$. Moreover, one can check that Lemma \ref{gp1} holds in this context. Summing up we have proved that $G$ acts  by holomorphic isometries and properly discontinuously on $\grc{p}{l^\C_2 (G)}$ and the quotient $\grc{p}{l^\C_2 (G)} /G$ inherits the structure of a K\"ahler manifold such that the covering map $\pi:\grc{p}{l^\C_2 (G)}  \lra \grc{p}{l^\C_2 (G)}/G$
is holomorphic. Since  $\grc{p}{l^\C_2 (G)}$ simply connected,
by Lemma \ref{hc} and Proposition \ref{dp}, we get the last part of  Theorem \ref{main2}.
\begin{remark}
The results proved in Section \ref{proof1} and Section \ref{section 2} remain valid when the Grassmannian manifolds are replaced by the oriented Grassmannian manifolds, i.e., $\mathbf{Gr}_{+} (p,\H)=\st{p}{\H}/\mathrm{SO}(p)$.
\end{remark}
\section{Homogeneous Hilbert manifolds of constant sectional curvature}\label{homogenous}
Let $(M,g)$ be a complete Riemannian manifold of constant sectional curvature $K$. It is well-known, see \cite{La,wolf}, that $M$ is isometric to $\tilde M/\Gamma$,  where $\tilde{M}$ is the complete simply connected Riemannian manifold with constant sectional curvature $K$, $\Gamma$ is a linear group acting isometrically and properly discontinuously on $\tilde M$ and the natural map $\pi:\tilde M \lra \tilde M/\Gamma$ is a Riemannian covering map. In this section we investigate homogeneous Riemannian manifold of infinite dimensional of constant sectional curvature. Our main result is the extension of a result of Wolf \cite{wolf1,wolf} in the infinite dimensional context.
We begin with the following Lemma that is well-known in the finite dimensional context.
\begin{lemma}\label{mainlemma}
Let $\tilde M$ be a complete and simply connected Hilbert manifold of constant curvature and let $\Gamma$ be  linear group acting isometrically and properly discontinuously on $\tilde M$. If $\tilde M/\Gamma$ is homogeneous, then the centralizer of $\Gamma$ in $\mathrm{Iso}(\tilde M)$, acts transitively on $\tilde M$.
\end{lemma}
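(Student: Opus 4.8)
The plan is to combine covering–space theory with the following consequence of the curvature hypothesis: a complete simply connected Hilbert manifold of constant curvature is one of the three standard models (the unit sphere in $\H$ with its induced metric when $K>0$, the Hilbert space $\H$ itself when $K=0$, infinite dimensional hyperbolic space when $K<0$), so that $\mathrm{Iso}(\tilde M)$ is a Banach--Lie group acting smoothly and transitively on $\tilde M$; this, together with the fact that $\Gamma$ is by hypothesis a \emph{linear} group fitting these models, is the only place where constant curvature is used.

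Set $M=\tilde M/\Gamma$ and let $\pi\colon\tilde M\to M$ be the covering map of Proposition \ref{dp}. First I would lift isometries: since $\tilde M$ is simply connected, every $f\in\mathrm{Iso}(M)$ admits a lift to an isometry $\tilde f$ of $\tilde M$ with $\pi\circ\tilde f=f\circ\pi$, and each such $\tilde f$ normalizes $\Gamma=\mathrm{Deck}(\pi)$. Writing $N:=N_{\mathrm{Iso}(\tilde M)}(\Gamma)$ for the normalizer, one obtains a surjective homomorphism $N\to\mathrm{Iso}(M)$ with kernel $\Gamma$; moreover, since $M$ is homogeneous, $N$ acts transitively on $\tilde M$: given $\tilde x,\tilde y$, move $\pi(\tilde x)$ to $\pi(\tilde y)$ by an element of $\mathrm{Iso}(M)$, lift it to some $\tilde f\in N$, and correct by the element of $\Gamma$ carrying $\tilde f(\tilde x)$ to $\tilde y$.

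The heart of the argument is to pass to the identity component $N^{0}$ of $N$ and establish two facts. First, $N^{0}$ centralizes $\Gamma$: proper discontinuity forces $\Gamma$ to be a discrete subgroup of $\mathrm{Iso}(\tilde M)$ (if $g_n\to e$ then $g_n$ eventually violates condition (a) of proper discontinuity), so for each fixed $\gamma\in\Gamma$ the continuous map $N^{0}\to\Gamma$, $g\mapsto g\gamma g^{-1}$, is constant; evaluating at the identity gives $g\gamma g^{-1}=\gamma$ for all $g\in N^{0}$, that is, $N^{0}\subseteq Z_{\mathrm{Iso}(\tilde M)}(\Gamma)$. Second, $N^{0}$ acts transitively on $\tilde M$: since $N$ is a Banach--Lie subgroup of $\mathrm{Iso}(\tilde M)$ acting transitively, its orbit maps are open, hence $N^{0}\cdot\tilde x$ is open in $\tilde M$ for every $\tilde x$; the $N^{0}$-orbits then partition $\tilde M$ into open sets, so each of them is also closed, and connectedness of $\tilde M$ leaves only one orbit. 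Combining the two facts, $Z_{\mathrm{Iso}(\tilde M)}(\Gamma)\supseteq N^{0}$ acts transitively on $\tilde M$, which is the assertion.

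The main obstacle is the functional–analytic input in the second half: in infinite dimensions one must justify that $\mathrm{Iso}(\tilde M)$, and then its closed subgroup $N$, is a Banach--Lie group acting smoothly, so that orbits are open submanifolds — this replaces the elementary dimension count available in Wolf's finite dimensional argument, and it is exactly here that one invokes the identification of $\tilde M$ with a standard model and of $\mathrm{Iso}(\tilde M)$ with the associated classical group. An alternative, closer to Wolf's original route \cite{wolf}, would be to prove first that every element of $\Gamma$ is a Clifford translation of $\tilde M$ and then that the centralizer of a group of Clifford translations is transitive; but the argument above seems the more economical one in the Hilbert setting.
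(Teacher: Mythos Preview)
Your approach is sound but takes a genuinely different route from the paper's. You pass through the normalizer $N=N_{\mathrm{Iso}(\tilde M)}(\Gamma)$ and its identity component $N^{0}$: discreteness of $\Gamma$ forces $N^{0}\subseteq Z(\Gamma)$, and then Banach--Lie structure on $\mathrm{Iso}(\tilde M)$ (for the three model spaces) gives openness, hence transitivity, of $N^{0}$-orbits on the connected space $\tilde M$. The paper does none of this. It asserts directly that every isometry of $\tilde M/\Gamma$ lifts to an element of the \emph{centralizer} $Z(\Gamma)$, so that $Z(\Gamma)$ already surjects onto $\mathrm{Iso}(\tilde M/\Gamma)$ and hence acts transitively on the quotient; it then argues that each $Z(\Gamma)$-orbit in $\tilde M$ is open using only that the covering map $\pi$ is a $Z(\Gamma)$-equivariant local homeomorphism, and concludes by connectedness. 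No Lie-group input is invoked.

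Your version is the standard finite-dimensional argument and is more careful on the covering-theoretic side (lifts of isometries in general only \emph{normalize} the deck group, so the paper's first assertion is at best elliptic). Its cost is exactly the analytic obstacle you flag: that $N$ is a Banach--Lie subgroup with open orbit maps is not automatic in infinite dimensions and has to be checked for each model. The paper's covering-map argument for openness of orbits sidesteps that input entirely; if you can make that step rigorous --- it is not completely transparent as written --- grafting it onto your steps (1)--(4) would give a cleaner proof than either version alone.
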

\begin{proof}
If $g: \tilde M /\Gamma \lra \tilde M /\Gamma$ is an isometry, then there exists an isometry $\tilde g:\tilde M \lra \tilde M$ centralizing $\Gamma$ such that the following diagram:
\[
\xymatrix{
\tilde M \,     \ar[d]_{\pi}  \ar[r]^{\tilde g} &  \tilde M \ar[d]^{\pi} \\
\tilde M /\Gamma   \ar[r]^{g} & \tilde M / \Gamma . \\
}
\]
is commutative. The vice-versa holds as well. Then $\mathrm{Iso}(\tilde M/\Gamma) \cong \mathrm{Z}(\Gamma)$ the centralizer in $\Gamma$ in $\mathrm{Iso}(\tilde M)$.
Hence $\mathrm{Z}(\Gamma)$ is a closed topological group acting transitively on $\tilde M /\Gamma$. Since $\pi$ is a local homeomorphism and it is also $\mathrm{Z}(\Gamma)$-equivariant,
any orbit of $\mathrm{Z}(\Gamma)$ in $\tilde M$ contains an open subset of $\tilde M$ and so it is itself a $\mathrm{Z}(\Gamma)$  invariant open subset of $\tilde M$. This also means that the complement of any orbit is open as well. Therefore, since $\tilde M$ is connected, $\tilde M$ is $\mathrm{Z}(\Gamma)$ homogeneous.
\end{proof}
\begin{defin}
Let $f: M \lra  M$ be an isometry. The \emph{displacement function} $\delta_f$ is defined as
$$\delta_f (x)=d(x,f(x)).$$
We will say that $f$ is a \emph{Clifford translation} if $\delta_f$ is constant.
\end{defin}
\begin{lemma}\label{geodesic}
Let $f:M \lra M$ be a Clifford translation of a complete Hilbert manifold of constant sectional curvature. Then there exists a geodesic $\gamma$ which is invariant under $f$.
\end{lemma}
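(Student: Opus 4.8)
The plan is to reduce everything to the finite–dimensional case, which is classical (a Clifford translation of a complete space form of constant curvature admits an invariant geodesic; see Wolf). First I would pick a point $x_0\in M$ and let $\gamma$ be a geodesic through $x_0$ (parametrized by arclength) realizing, in a neighbourhood, the distance to $f(x_0)$; more precisely, since $M$ is complete and has constant sectional curvature it is Hopf–Rinow-type near any point, and by taking $f(x_0)$ close enough to $x_0$ (which one can always arrange after replacing $f$ by a suitable power? — no: better, use the local structure) one gets a minimal geodesic from $x_0$ to $f(x_0)$. The key observation, exactly as in the finite dimensional proof, is that $f$ maps this geodesic segment to the geodesic segment from $f(x_0)$ to $f^2(x_0)$, and because $\delta_f$ is constant, $d(x_0,f(x_0))=d(f(x_0),f^2(x_0))=\delta_f$, so the broken geodesic $x_0\to f(x_0)\to f^2(x_0)$ has the property that each piece is minimal; one then shows it is unbroken, i.e. an honest geodesic, by a first–variation / angle argument: if there were an angle at $f(x_0)$ one could shortcut and contradict that $\delta_f$ is the minimum of the displacement, or contradict minimality of a nearby segment.

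The step that needs the infinite–dimensional care is the passage to a separable, in fact finite–dimensional, totally geodesic submanifold containing the relevant data, so that the classical arguments (which use local compactness and the full Hopf–Rinow theorem) apply. Concretely, I would argue: the minimal geodesic from $x_0$ to $f(x_0)$, by the constant–curvature structure equations, lies in a finite dimensional totally geodesic submanifold $N$ of $\tilde M$ (this is the analogue of the fact used in Proposition \ref{hopf-rinow} and Corollary \ref{sc1} that geodesics in Stiefel manifolds stay in finite dimensional Stiefel manifolds; for a space form it is immediate from the explicit description of geodesics). One must then check that the iterates $f^k(x_0)$ and the broken geodesic through them also lie in one common finite dimensional totally geodesic submanifold — this is the main obstacle, since a priori $f$ need not preserve $N$. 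I would handle it by taking $W$ to be the closed span of $\{f^k(x_0),\dot{c}(f^k(x_0)):k\in\Z\}$ together with enough of the curvature data; because $\tilde M$ has constant curvature, the exponential image of such a subspace is totally geodesic and isometric to a (possibly infinite dimensional) space form, but it suffices to localize near one point to get a finite dimensional piece where the broken-geodesic-is-a-geodesic argument runs.

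Having established that the orbit $\{f^k(x_0)\}$ lies on a single complete geodesic $\gamma$ of $\tilde M$, invariance under $f$ follows: $f$ sends $\gamma$, the unique geodesic through $x_0$ and $f(x_0)$ with the minimality property, to the geodesic through $f(x_0)$ and $f^2(x_0)$ with the same property, which is $\gamma$ itself (uniqueness), and $f$ acts on $\gamma$ as a translation by $\delta_f$. The expected technical nuisance, beyond the finite–dimensional reduction, is the case $\delta_f=0$, where $f$ has a fixed point and any geodesic through it is trivially invariant — this should be dispatched separately at the start. I would also invoke Lemma \ref{mainlemma} only implicitly; the present lemma is purely about a single Clifford translation and does not need the homogeneity hypothesis.
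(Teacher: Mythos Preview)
Your core strategy --- take a minimal geodesic $\gamma$ from a point $p$ to $f(p)$ and show that the concatenation $\gamma\cup f(\gamma)$ is a single geodesic --- is exactly the paper's. But almost everything you layer on top of that is unnecessary, and the one step that actually matters is the one you leave vaguest.

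First, the finite--dimensional reduction is a detour. The paper simply quotes Lang \cite{La} for the fact that a complete Hilbert manifold of constant sectional curvature satisfies Hopf--Rinow, so a minimal geodesic from $p$ to $f(p)$ exists outright; no passage to a totally geodesic finite--dimensional slice is needed, and the obstacle you flag (that $f$ need not preserve such a slice) therefore never arises. Your hesitation about ``taking $f(x_0)$ close enough'' or ``replacing $f$ by a power'' is likewise unnecessary.

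Second, for the key step --- why is the broken geodesic unbroken at $f(p)$ --- your proposed ``first--variation / angle / shortcut'' argument is both more delicate than needed and not quite right as stated: $\delta_f$ being \emph{constant} is not a minimality statement, so ``shortcutting to contradict that $\delta_f$ is the minimum'' does not make sense. The paper instead uses a one--line metric computation. For any $t\in[0,1]$,
\[
d(\gamma(t),f(\gamma(t)))=d(p,f(p))=d(p,\gamma(t))+d(\gamma(t),f(p))=d(\gamma(t),f(p))+d(f(p),f(\gamma(t))),
\]
using, in order, that $f$ is Clifford, that $\gamma$ is minimal, and that $f$ is an isometry. Equality in the triangle inequality shows that the concatenation $\gamma|_{[t,1]}$ followed by $f\circ\gamma|_{[0,t]}$ is length--minimizing from $\gamma(t)$ to $f(\gamma(t))$, hence a geodesic; in particular there is no corner at $f(p)$, and iterating gives an $f$--invariant geodesic. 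No variational argument, no curvature computation, no submanifold gymnastics.
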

\begin{proof}
Let $p\in M$ and let $\gamma:[0,1]\lra M$ be the minimal geodesic joining $p$ and $f(p)$. This geodesic exists since $M$ is Hopf-Rinow \cite{La}.
Since $f$ is a Clifford translation, then
\[
\begin{split}
d(\gamma(t),f(\gamma(t)))&=d(p,f(p)) \\
                        &=d(p,\gamma(t))+d(\gamma(t),f(p))\\
                        &=d(\gamma(t),f(p))+d(f(p),f(\gamma(t)))\\
\end{split}
\]
and so the curve formed by $\gamma$ and $f(\gamma(t))$  is a geodesic and the result follows.
\end{proof}
\begin{remark}
The above Lemma works for any Hilbert manifold satisfying the Hopf-Rinow Theorem.
\end{remark}
As in the finite dimensional, the following Lemmata hold \cite{wolf}.
\begin{lemma}\label{lh1}
Suppose that  $\tilde M/\Gamma$ is homogeneous and  let $g\in \Gamma$. Then $g$ is a Clifford translation.
\end{lemma}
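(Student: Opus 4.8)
The plan is to use the displacement function $\delta_g$ and show it is constant by combining homogeneity of $\tilde M/\Gamma$ with the lifting correspondence established in Lemma \ref{mainlemma}. First I would observe that since $g \in \Gamma$ and $\pi\colon \tilde M \lra \tilde M/\Gamma$ is a Riemannian covering with $\Gamma$ the deck group, for any $x \in \tilde M$ the distance $d(x, g(x))$ is at least the length of the loop $\pi \circ \sigma$ at $\pi(x)$, where $\sigma$ is a minimal geodesic from $x$ to $g(x)$; more precisely $\delta_g(x)$ is computed from the geometry downstairs via the conjugacy class of $g$ in $\pi_1(\tilde M/\Gamma, \pi(x))$. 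The key point is that for a point $y = h(x)$ in the same $\Gamma$-orbit, with $h \in \Gamma$, one has $\delta_g(h(x)) = d(h(x), g h(x))$, and if $h$ commutes with $g$ this equals $d(h(x), h g(x)) = d(x, g(x)) = \delta_g(x)$ since $h$ is an isometry. So $\delta_g$ is constant along orbits of $Z(\Gamma)$, the centralizer of $\Gamma$ in $\mathrm{Iso}(\tilde M)$.

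Now I would invoke Lemma \ref{mainlemma}: since $\tilde M/\Gamma$ is homogeneous, $Z(\Gamma)$ acts transitively on $\tilde M$. Therefore $\delta_g$, being constant on the single $Z(\Gamma)$-orbit which is all of $\tilde M$, is constant on $\tilde M$. By the definition preceding the lemma, this says exactly that $g$ is a Clifford translation, which is what we wanted.

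The only delicate point — and the step I expect to be the main obstacle — is justifying that the relevant $\tilde g \in Z(\Gamma)$ produced by Lemma \ref{mainlemma} may be taken to have the form needed, i.e.\ that for \emph{every} pair $x, y \in \tilde M$ there is an isometry in $Z(\Gamma)$ carrying $x$ to $y$ (this is literally the transitivity statement of Lemma \ref{mainlemma}, so it is available), and that elements of $Z(\Gamma)$ genuinely commute with $g$ so that the computation $d(h(x), g h(x)) = d(h(x), h g(x))$ is valid. Once transitivity of $Z(\Gamma)$ is in hand, the argument is essentially a two-line isometry computation; no curvature hypothesis beyond what is needed for Lemma \ref{mainlemma} and for $\tilde M$ to be Hopf--Rinow is actually used here. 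I would close by remarking that this is the infinite-dimensional analogue of the classical fact in Wolf \cite{wolf}, the proof being formally identical.
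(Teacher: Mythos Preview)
Your argument is correct and is exactly the classical proof the paper intends: the paper gives no proof of its own for this lemma but simply cites Wolf \cite{wolf}, and Wolf's argument is precisely the centralizer computation you describe (Lemma \ref{mainlemma} supplies transitivity of $Z(\Gamma)$ on $\tilde M$, and then $\delta_g(h(x))=d(h(x),hg(x))=\delta_g(x)$ for $h\in Z(\Gamma)$). The material in your first paragraph about loops and conjugacy classes in $\pi_1$ is unnecessary and can be omitted.
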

\begin{lemma}\label{lh2}
The Clifford translations of $\H$ are just the ordinary translations. The only Clifford translation of hyperbolic space is the identity.
\end{lemma}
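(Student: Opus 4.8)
The statement to prove is Lemma \ref{lh2}: the Clifford translations of a Hilbert space $\H$ are exactly the ordinary translations, and the only Clifford translation of (infinite-dimensional) hyperbolic space is the identity. I would handle the two spaces separately, in each case using Lemma \ref{geodesic} to produce an $f$-invariant geodesic and then propagating the constancy of the displacement along the whole manifold.

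\emph{The Euclidean case.} One direction is trivial: an ordinary translation $v\mapsto v+w$ has $\delta_f\equiv\|w\|$. For the converse, let $f$ be an isometry of $\H$ with $\delta_f\equiv c$. Every isometry of a Hilbert space is affine, $f(x)=Ax+b$ with $A\in\mathrm O(\H)$ (Mazur--Ulam), so $\delta_f(x)^2=\|Ax-x+b\|^2=c^2$ for all $x$. Expanding, $\|(A-\mathrm{Id})x\|^2+2\langle(A-\mathrm{Id})x,b\rangle+\|b\|^2=c^2$ identically in $x$; replacing $x$ by $tx$ and comparing coefficients of $t^2$ and $t$ forces $(A-\mathrm{Id})x=0$ for every $x$, i.e.\ $A=\mathrm{Id}$ and $f$ is the translation by $b$. (Alternatively, and more in the spirit of the surrounding text: by Lemma \ref{geodesic} there is an $f$-invariant geodesic $\gamma$, a straight line; $f$ restricted to that line is a translation by $c$; then for any point $p$, minimality of the segment from $p$ to $f(p)$ together with the invariant line shows $f$ preserves the direction of that line, and a short argument gives that $f$ is translation by the corresponding vector of length $c$.)

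\emph{The hyperbolic case.} Suppose $f$ is a Clifford translation of infinite-dimensional hyperbolic space $\H^\infty$ with $\delta_f\equiv c$; I want $c=0$. By Lemma \ref{geodesic} there is an $f$-invariant complete geodesic $\gamma$, and $f$ acts on $\gamma$ as translation by arclength $c$. The key obstruction — and the main point — is to show $c=0$; here the argument must genuinely use negative curvature. In finite dimensions Wolf's proof uses that a point $p$ at distance $r$ from the axis $\gamma$ has displacement strictly increasing in $r$ (by the hyperbolic law of cosines / convexity of the distance function in nonpositive curvature), so if $\delta_f$ is constant then either $c=0$ or every point lies on $\gamma$, which is absurd once $\dim\ge 2$. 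I would reproduce exactly this: take $p\notin\gamma$, drop the perpendicular to $\gamma$, use the $f$-invariance of $\gamma$ and the first variation/comparison to get $\delta_f(p)>\delta_f(\text{foot point})=c$, contradicting constancy unless $c=0$; and $c=0$ means $f$ fixes $\gamma$ pointwise, after which one argues $f=\mathrm{Id}$ (a nontrivial isometry fixing a geodesic pointwise in constant negative curvature would have to be a "rotation" about $\gamma$, but such a map does not have constant displacement unless it is the identity — or one simply notes a Clifford translation fixing one point is the identity since its displacement is $0$ everywhere). All of the comparison geometry needed takes place inside a $2$-dimensional totally geodesic subspace (the one spanned by $\gamma$ and $p$), so no new infinite-dimensional input is required beyond Hopf--Rinow, which was established earlier.

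\emph{Where the difficulty lies.} The Euclidean half is routine. The real content is the hyperbolic half, specifically the strict monotonicity of the displacement function as one moves away from the invariant axis; this is where nonpositive (indeed strictly negative) curvature is essential, and it is the step I would write out carefully, reducing it to the classical two-dimensional hyperbolic trigonometry via a totally geodesic $\mathbb H^2\hookrightarrow\H^\infty$. Everything else — existence of the invariant geodesic, existence of minimizing geodesics — has already been supplied by Lemma \ref{geodesic} and the Hopf--Rinow results of Section \ref{prelim}.
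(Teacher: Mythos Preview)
Your proposal is correct, and in fact goes well beyond what the paper itself does: the paper gives no proof of this lemma at all --- it simply writes ``As in the finite dimensional case, the following Lemmata hold \cite{wolf}'' and cites Wolf. What you have written is essentially Wolf's classical argument spelled out, with the extra observation that the relevant comparisons live in finite-dimensional totally geodesic slices so that no genuinely infinite-dimensional work is needed beyond Hopf--Rinow and Lemma~\ref{geodesic}.

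Two small remarks on the hyperbolic half. First, the totally geodesic slice you invoke need not be $2$-dimensional: if $f$ carries a rotational component about its axis $\gamma$, then $f(p)$ will not lie in the $2$-plane through $\gamma$ and $p$. It does, however, lie in the (at most $3$-dimensional) totally geodesic hyperbolic subspace spanned by $\gamma$, $p$ and $f(p)$, and Wolf's strict-monotonicity argument applies verbatim there; so the reduction to finite dimensions is fine, just not to dimension~$2$. Second, once you have $c=0$ you are done immediately: $\delta_f\equiv 0$ means $f(x)=x$ for every $x$, so the extra step ``$f$ fixes $\gamma$ pointwise, after which one argues $f=\mathrm{Id}$'' is unnecessary (your parenthetical already says this). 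Neither point is a gap; the argument stands.
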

We can now classify complete Hilbert manifolds of constant sectional curvature.
Naturally we can assume that the sectional curvature is $0$, or $\pm1$.
\begin{thm}
Let $M$ be a homogeneous Hilbert manifold of constant sectional curvature $K\leq 0$. If $K= -1$, then $ M$ is isometric to the hyperbolic space. If $K=0$, then $M$ is isometric to  $\H /\Gamma$, where $\H$ is a Hilbert space, $\{v_i \in \H:\  i\in I\}$ is a family of linearly independent vectors of $\H$  and   $\Gamma=\mathrm{span}_{\Z}(v_i:\, i\in I )$.
\end{thm}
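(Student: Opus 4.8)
The plan is to reduce the classification to the description of Clifford translations, exactly as in the finite dimensional situation, using the machinery already set up. Recall that by the remark preceding the theorem, $M$ complete of constant curvature $K$ is isometric to $\tilde M/\Gamma$, where $\tilde M$ is the unique complete simply connected Hilbert manifold of curvature $K$ and $\Gamma$ acts isometrically and properly discontinuously. For $K=-1$ we take $\tilde M$ to be hyperbolic space; for $K=0$ we take $\tilde M=\H$ a Hilbert space. The hypothesis is that $M=\tilde M/\Gamma$ is homogeneous, so Lemma \ref{lh1} applies: every $g\in\Gamma$ is a Clifford translation of $\tilde M$.

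\emph{Case $K=-1$.} By Lemma \ref{lh2} the only Clifford translation of hyperbolic space is the identity, so $\Gamma=\{e\}$ and $M=\tilde M$ is the hyperbolic space. This case is immediate.

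\emph{Case $K=0$.} Here $\tilde M=\H$ and, again by Lemma \ref{lh2}, every $g\in\Gamma$ is an ordinary translation $v\mapsto v+w_g$ for some $w_g\in\H$. The map $g\mapsto w_g$ is then a group isomorphism of $\Gamma$ onto a subgroup $\Lambda=\{w_g:\,g\in\Gamma\}$ of the additive group $(\H,+)$, and $M$ is isometric to $\H/\Lambda$. It remains to see that $\Lambda$ has the stated form, i.e.\ that $\Lambda=\mathrm{span}_{\Z}(v_i:\,i\in I)$ for some family $\{v_i:\,i\in I\}$ of linearly independent vectors. This is where the properly discontinuous hypothesis does the work: by condition (a) in the definition, there is a neighborhood of $0$ meeting $\Lambda$ only in $0$, so $\Lambda$ is a discrete subgroup of $\H$; moreover discreteness forces $\Lambda$ to be torsionfree (a nonzero element of finite order would have to be $0$). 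I would then argue that a discrete subgroup of a Hilbert space is a free abelian group admitting a $\Z$-basis consisting of $\R$-linearly independent vectors: pick a maximal $\R$-linearly independent subset $\{u_j\}$ of $\Lambda$, note that $\Lambda$ lies in the closed span $W$ of the $u_j$ (if some $w\in\Lambda$ were not in $W$ it would extend the independent set), and on each finite-dimensional subspace $W_0=\mathrm{span}_{\R}(u_{j_1},\dots,u_{j_n})$ the intersection $\Lambda\cap W_0$ is a discrete subgroup of $\R^n$, hence a lattice with a basis of linearly independent vectors; assembling these compatibly (or invoking the fact that a torsionfree group all of whose finitely generated subgroups are free of bounded rank determined by linear independence is free with a linearly independent basis) yields the desired family $\{v_i:\,i\in I\}$ with $\Lambda=\mathrm{span}_{\Z}(v_i:\,i\in I)$. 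Then $\Gamma\cong\Lambda$ and $M\cong\H/\Gamma$ with $\Gamma=\mathrm{span}_{\Z}(v_i:\,i\in I)$, as claimed.

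The main obstacle is the last structural point in the $K=0$ case: passing from ``$\Lambda$ is a discrete subgroup of a Hilbert space'' to ``$\Lambda$ has a $\Z$-basis of $\R$-linearly independent vectors,'' because in infinite dimensions one cannot simply quote the classical theorem on discrete subgroups of $\R^n$ and must be careful that the maximal independent set and the filtration by finite-dimensional subspaces really produce a global basis (in particular that $\Lambda$ is contained in the $\R$-span, not merely the closed span, of the chosen vectors). Everything else is a direct application of Lemmata \ref{lh1} and \ref{lh2} together with the reduction to $\tilde M/\Gamma$.
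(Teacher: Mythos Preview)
Your proposal is correct and follows essentially the same route as the paper: for $K=-1$ both invoke Lemmata~\ref{lh1} and~\ref{lh2} to force $\Gamma=\{e\}$, and for $K=0$ both identify $\Gamma$ with a group of ordinary translations and then reduce to finite-dimensional subspaces where Wolf's classical lattice theorem applies. The paper is in fact terser than you are on the assembly step---it takes an arbitrary finite subset $J$ of $\Gamma$, observes that $\Gamma\cap\mathrm{span}(v_k:k\in J)$ acts properly discontinuously on that finite-dimensional span and hence is $\mathrm{span}_{\Z}$ of linearly independent vectors by \cite{wolf}, and then simply says ``the theorem follows''---so the gap you flag (passing from the finite-dimensional slices to a global $\Z$-basis of $\R$-independent vectors) is present in the paper's argument as well, not an obstacle peculiar to your version.
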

\begin{proof}
By Lemmata \ref{lh1} and \ref{lh2}, if $M/\Gamma$ is homogeneous and $K<0$, then $\Gamma=\{e\}$ and so $M$ is isometric to the hyperbolic space. If $K=0$, then $\Gamma$ must contains just ordinary translation. Hence we may write $\Gamma=\{v_i\in \H, i\in I\}$. Let  $J\subset I$ be a finite subset of $I$, and let $\sigma =\mathrm{span} (v_k:\ k\in J)$. Let $\tilde J=\{v\in \Gamma:\ v\in \sigma \}$. Then $\tilde J$ acts freely and properly discontinuously on  the finite dimensional subspace $\sigma$. Therefore, see \cite{wolf}, there exists $v_1',\ldots,v_k' \in \tilde J$ such that $\tilde J=\mathrm{span}_\Z (v_1',\ldots , v_k')$  and the theorem follows.
\end{proof}
\begin{thm}\label{hp}
Let $S(\H)$ be the unit sphere of an infinite dimensional Hilbert space $\H$.
Let $\Gamma$ be a group acting isometrically and properly discontinuously on $S(\H)$. Then $S(\H)/\Gamma$ is homogeneous if and only if
$\H$ is a Hilbert space over
$\mathbb F$ where $\mathbb F$ is one of the fields $\R$, $\C$ and $\mathbb Q$ (quaternions), $\Gamma$ is a finite multiplicative group of elements of norm $1$ in $\mathbb F$ which is not contained in a proper subfield $\mathbb F_1$, $\R \subset \mathbb F_1 \subseteq \mathbb F$
of $\mathbb F$ and $\Gamma$ acts on $S(\H)$ by $\mathbb F$-scalar multiplication of vectors. Conversely, all such manifolds are homogeneous manifolds of constant sectional curvature $1$.
\end{thm}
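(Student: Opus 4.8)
The plan is to follow Wolf's classical argument \cite{wolf,wolf1}, adapting it to the infinite-dimensional sphere. The one direction that is essentially formal is the converse: if $\mathbb F$ is one of $\R,\C,\mathbb Q$ and $\Gamma$ is a finite group of unit-norm elements acting by $\mathbb F$-scalar multiplication, then the action is free (a fixed point would force a nontrivial relation $\lambda v = v$ with $\lambda\ne 1$), hence properly discontinuous since $\Gamma$ is finite, and $S(\H)/\Gamma$ is a manifold of constant curvature $1$. Homogeneity follows because the $\mathbb F$-linear unitary group of $\H$ acts transitively on $S(\H)$ and centralizes $\Gamma$ (scalar multiplication is $\mathbb F$-linear); by the argument of Lemma \ref{mainlemma} the centralizer descends to a transitive group of isometries of the quotient. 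So I would dispatch this half quickly and concentrate on the forward implication.

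For the forward direction, assume $S(\H)/\Gamma$ is homogeneous. By Lemma \ref{lh1} every $g\in\Gamma$ is a Clifford translation of $S(\H)$, and by Lemma \ref{geodesic} (valid since $S(\H)$ is Hopf--Rinow) each such $g$ leaves invariant a geodesic, i.e. a great circle. The key step is to run the \emph{finite-dimensional} structure theory inside every finite-dimensional "coordinate" subsphere: given any finite subset of $\Gamma$ and any finite-dimensional subspace $V\subset\H$, one enlarges $V$ so that $\Gamma$ does not quite act on $S(V)$, but each individual $g$ in the chosen finite set restricts, via its invariant great circles, to a Clifford translation of a finite-dimensional sphere $S(V_g)$. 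Wolf's theorem then tells us that a Clifford translation of a round sphere $S^{n}$ with $n$ large is given by multiplication by a unit element of $\R$, $\C$ or $\mathbb Q$ acting on the appropriate $\R$-, $\C$- or $\mathbb Q$-vector space structure; in particular $g$ has finite order and its displacement function is constant with a value forced by that algebraic picture. From this one extracts, for each $g$, a well-defined field $\mathbb F_g\in\{\R,\C,\mathbb Q\}$ and a scalar $\lambda_g\in\mathbb F_g$ of norm $1$.

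The remaining work is to globalize and to show the $\mathbb F_g$ all coincide. First, compatibility: if $g,h\in\Gamma$ then $gh\in\Gamma$ is again a Clifford translation, and comparing the induced scalars on a common finite-dimensional invariant subspace forces the multiplication $\lambda_g\lambda_h$ to make sense, which pins down a single subfield $\mathbb F\subseteq\mathbb Q$ (using that $\R,\C,\mathbb Q$ are the only possibilities and that a product of a $\C$-scalar and an independent $\C$-scalar generating a noncommutative pair lands in $\mathbb Q$). This makes $\H$ an $\mathbb F$-Hilbert space and $\Gamma$ a subgroup of the unit-norm elements of $\mathbb F$ acting by scalar multiplication, with $\Gamma$ finite since each $\lambda_g$ is a root of unity (over $\R$ or $\C$) or has finite order in $\mathbb Q^\times_1$ and $\Gamma$ must be discrete. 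Finally, minimality: $\Gamma$ cannot lie in a proper subfield $\mathbb F_1$ with $\R\subseteq\mathbb F_1\subsetneq\mathbb F$, for otherwise the $\mathbb F$-scalars outside $\mathbb F_1$ would already exhibit $\Gamma$ acting on a smaller-field sphere, contradicting that the homogeneity data produced exactly $\mathbb F$; equivalently, if $\Gamma\subset\mathbb F_1$ then $S(\H)/\Gamma$ would be homogeneous with the \emph{smaller} field and the larger $\mathbb F$-structure would be spurious.

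I expect the main obstacle to be the passage from the finite-dimensional classification to the infinite-dimensional statement in a way that is genuinely uniform: one must check that the field $\mathbb F_g$ and the scalar $\lambda_g$ obtained from a finite-dimensional invariant subsphere do not depend on which subsphere is used (this is where one invokes that a Clifford translation of $S^n$, $n\ge 3$ say, rigidly determines its algebraic type), and that these local data patch to a single global $\mathbb F$-vector space structure on all of $\H$ — i.e. that the various $\R$-linear complex (or quaternionic) structures agree on overlaps. Once that coherence is established, the finiteness of $\Gamma$ and the minimality of $\mathbb F$ are comparatively soft.
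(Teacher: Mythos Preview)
Your proposal is correct in spirit and follows the same overall strategy as the paper --- reduce to Wolf's finite-dimensional classification by restricting Clifford translations to finite-dimensional invariant subspheres --- but the paper's execution is considerably more direct, and you are making the reduction harder than it needs to be. The paper does not work with arbitrary finite subsets of $\Gamma$ or try to enlarge a given $V$; instead, for a \emph{single} Clifford translation $T\in\Gamma$, the invariant great circle from Lemma~\ref{geodesic} together with proper discontinuity forces $T$ to have finite order $n$, and then for \emph{every} $x\in S(\H)$ the orbit span $W_x=\operatorname{span}(x,T(x),\ldots,T^{n-1}(x))$ is automatically a finite-dimensional $T$-invariant subspace. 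On $S(W_x)$ the map $T$ is again a Clifford translation, so finite-dimensional Wolf gives that $T$ acts by scalar multiplication there; constancy of the displacement then forces the \emph{same} scalar on every $W_x$, and an orthogonal decomposition $\H=W\oplus W^\perp$ with iteration shows $T$ is global scalar multiplication on $\H$. Once every element of $\Gamma$ is a scalar, the compatibility of the fields $\mathbb F_g$ and the classification of the possible finite groups is exactly the algebraic content of Wolf's theorem and is simply cited, so your explicit patching argument and the separate treatment of ``$\mathbb F_g$'s coinciding'' become unnecessary. In short: replace your vague ``enlarge $V$ so that each $g$ restricts'' step by the orbit-span construction, handle one $T$ at a time, and defer the group-level field compatibility to Wolf.
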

\begin{proof}
Let $T$ be a Clifford translation. By Lemma \ref{geodesic}, $T$ leaves  a geodesic $\gamma$ invariant on $S(\H)$. Now, it is well-known that any geodesic in $S(\H)$ is closed and it spans a two dimensional vector space. Hence $T$ must act as a Clifford Translation on the unit circle and so it has finite order $n$ \cite{wolf}. Moreover, $T$ is a Clifford translation on the unit sphere of any subspace spanned by $x,T(x),\ldots, T^{n-1} (x)$ and so it acts by multiplication on it. Now,
$\H=W\oplus W^\perp$ and we can iterate the above argument on $W^\perp$. However,  the \emph{displacement function} of $T$ must be constant and so $T$ must act by multiplication on  $\H$ and so the Theorem follows directly by a Theorem of Wolf \cite{wolf1,wolf}.
\end{proof}
We conclude this section  giving a properly discontinuous $\Z^n$ action on the hyperbolic space of infinite dimensional.

Let $\mathbb H^n=\{(x_1,\ldots,x_{n+1}):\, x_{n+1}>0 \}$ and let $g=\frac{1}{x_{n+1}^2} (\mathrm d x_1^2 + \cdots + \mathrm d x_{n+1}^2)$.  $\riem{\mathbb H^n}{g}$
is the upper half plane model of the hyperbolic space. The group $\Z^n$ acts isometrically and properly discontinuously on $\H^n$ by setting
\[
\bigl( (m_1,\ldots,m_n),(x_1,\ldots,x_{n+1})\bigr) \mapsto (x_1+m_1,\ldots,x_n+m_n,x_{n+1}).
\]
It is well-known that $\mathbb H^n$ can be also described as
\[
\{(x_1,\ldots,x_n,t)\in \R^{n+1}:\,t>0, \  x_1^2+\cdots +x_n^2-t^2 =-1 \}.
\]
The Riemannian metric on $\mathbb H^n$, that we also denote with $g$, is the restriction of the Minkowski  metric of $\R^{n+1}$ on $\mathbb H^n$. The isometry group $\mathrm{Iso}(\H^n,g)=\mathrm{O}(n,1)_+$ is  the set of the isometries with respect to the Minkowski metric which preserve $\H^n$. Therefore, we have an injective homomorphism $\Z^n \hookrightarrow \mathrm{O}(n,1)$ such that $\Z^n$ acts linearly, isometrically and properly discontinuously on the hyperbolic space $\H^n$.

Let $\H=l_2 (\Z^n)$. Then
\[
\{\bigl( x, (x_1,\ldots,x_n , t)\bigr) \in l_2 (\Z^n) \times \R^{n+1}:\, t>0, \  \parallel x \parallel^2 + x_1^2+\cdots +x_n^2-t^2 =-1 \}
\]
endowed with the Riemannian metric  induced by the Minkowski metric on $l_2 (\Z^n) \times \R^{n+1}$, given by:
\[
\langle \bigl(x,(x_1,\ldots,x_n , t)\bigr), \bigl(y,(y_1,\ldots,y_n , s)\bigr)=\langle x,y \rangle + x_1y_1+\cdots +x_ny_n-st,
\]
is the infinite dimensional hyperbolic space that we will denote by $\H^\infty$.
Then $\Z^n$ acts on $\H^\infty$ by setting:
\[
\Z^n \times \H^\infty \lra \H^\infty , \ \ \bigl( n,(x_1,\ldots,x_n,t) \bigr)\mapsto \bigl(n(x), n(x_1,\ldots,x_n,t)\bigr),
\]
where the $\Z^n$ action on $l_2 (\Z^n)$  has been described in section \ref{proof1} while the $\Z^n$ action on the last coordinate is the $\Z^n$ action on $\R^{n+1}$ described before.
Indeed, $\Z^n$ preserves the Minkowski metric and so it induces an isometric action on $\H^\infty$. We shall prove that this action is properly discontinuous on $\H^\infty$. It is easy to check that $\Z^n$ acts freely on $\H^\infty$.
Let $a_k$ be a sequence in $\Z^n$ and let
$(x,x_1,\ldots,x_n,t)\in \H^\infty$.  Assume that the sequence $a_k \bigl(x,(x_1,\ldots,x_n,t)\bigr)$ converges to an element of $\H^\infty$. Then $a_k (x)$ must converge in $l_2 (\Z^n)$ and so by Theorem \ref{miotesi}, this implies that $x=0$.
Hence $(x_1,\ldots,x_n, t)\in \H^n$ and $a_k (x_1,\ldots,x_n ,t)$ converges which is a contradiction. Applying Proposition \ref{dp} and Theorem \ref{hp} we get the following result.
\begin{prop}
There exists a complete Hilbert manifold modeled on a infinite dimensional separable Hilbert space with constant sectional curvature $-1$ whose fundamental group is isomorphic to $\Z^n$. In particular this manifold is not homogeneous.
\end{prop}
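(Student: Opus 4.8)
The plan is to assemble the pieces established immediately above the statement. The space $\H^\infty$, realized as the quadric
\[
\{(x,(x_1,\ldots,x_n,t))\in l_2(\Z^n)\times\R^{n+1}:\ t>0,\ \norma{x}^2+x_1^2+\cdots+x_n^2-t^2=-1\}
\]
with the metric induced by the Minkowski product on $l_2(\Z^n)\times\R^{n+1}$, is a complete, simply connected Hilbert manifold of constant sectional curvature $-1$, modeled on a separable infinite dimensional Hilbert space (separable because $\Z^n$ is countable, so $l_2(\Z^n)\times\R^{n+1}$ is separable). We have already exhibited the $\Z^n$-action on $\H^\infty$ and checked that it is isometric (it preserves the ambient Minkowski metric) and discontinuous: if a sequence $a_k(x,(x_1,\ldots,x_n,t))$ converges, then $a_k(x)$ converges in $l_2(\Z^n)$, which by Theorem \ref{miotesi} forces $x=0$, whence $(x_1,\ldots,x_n,t)\in\H^n$ and $a_k(x_1,\ldots,x_n,t)$ converges, contradicting proper discontinuity of the $\Z^n$-action on the finite dimensional hyperbolic space $\H^n$.

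First I would invoke Proposition \ref{dp}: since $\Z^n$ acts discontinuously and isometrically on the complete Hilbert manifold $\H^\infty$, the action is properly discontinuous, the quotient $M:=\H^\infty/\Z^n$ inherits a Riemannian structure making $\pi:\H^\infty\lra M$ a Riemannian covering map, and $M$ is again complete. Being a local isometry, $\pi$ transports constant curvature $-1$ to $M$, and $M$ is modeled on the same separable infinite dimensional Hilbert space. Since $\H^\infty$ is simply connected, $\pi$ is the universal covering, so $\pi_1(M)\cong\Z^n$.

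It remains to see that $M$ is not homogeneous. Suppose it were. Then by Lemma \ref{lh1} every element of $\Z^n$ would be a Clifford translation of $\H^\infty$, and by Lemma \ref{lh2} the only Clifford translation of hyperbolic space is the identity; since $\Z^n$ acts freely and nontrivially this is absurd. (Equivalently, by the classification theorem above, a homogeneous complete Hilbert manifold of constant curvature $-1$ is isometric to the hyperbolic space $\H^\infty$ and hence simply connected, contradicting $\pi_1(M)\cong\Z^n\neq\{e\}$.) Therefore $M$ is not homogeneous.

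The only genuinely nontrivial input is the discontinuity of the $\Z^n$-action on $\H^\infty$, carried out in the paragraphs preceding the statement, which rests on Theorem \ref{miotesi} to kill the infinite dimensional component of a would-be convergent orbit; everything else is a formal consequence of Proposition \ref{dp} and the structure theory of this section, and I expect no further obstacle.
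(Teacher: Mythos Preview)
Your proposal is correct and follows essentially the same route as the paper: the construction of $\H^\infty$ and the verification that the $\Z^n$-action is isometric and discontinuous are carried out in the paragraphs preceding the proposition, and the paper then simply invokes Proposition \ref{dp} for the quotient, exactly as you do. For the non-homogeneity, the paper cites Theorem \ref{hp} (which, strictly speaking, concerns the positive curvature case), whereas you argue directly via Lemmata \ref{lh1} and \ref{lh2} or, equivalently, via the classification theorem for $K\leq 0$; your justification is in fact the more precise one.
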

\begin{remark}
Let $n\in \N$. For any positive integer $p$, let $\H=l_2 (\Z^n)\times  \underbrace{\R^{n+1} \times \cdots \times \R^{n+1}}_p$
endowed by the symmetric bilinear form of index $p$ whose  quadratic form is given by:
\[
\begin{split}
q \bigl((x,( x^1_1,\ldots, x_n^1, t^1),\ldots, (x_1^p,\ldots ,x_n^p,t^p) \bigr)&=\parallel x \parallel^2 \\ &+ \sum_{k=1}^p ( (x_1^k)^2 +\ldots +( x_n^k)^2 -(t^k )^2 ).
\end{split}
\]
Then $\H^\infty_p=\{z\in \H:\ q(z)=-1, \ t^1,\cdots ,t^p >0\, \}$ is a Hilbert manifold that can be endowed  by a Semi-Riemannian metric of index $p-1$.
Now, $\Z^n$ acts diagonally on $\H$ and it preserves $\H^\infty_p$. By the same arguments used before, it can be proved that $\Z^n$ acts isometrically and properly discontinuously on $\H^\infty_p$.
Therefore $\H^\infty_p/\Z^n$ is a Hilbert manifold endowed by a Semi-Riemannian metric of index $p-1$ whose fundamental groups is isomorphic to $\Z^n$.
\end{remark}

\end{document}